\let\cong\equiv\relax
\renewcommand{\equiv}{\simeq}
\newcommand{\compl}{\wedge}
\newcommand{\pcpl}{_p^\compl}
\newcommand{\pIcpl}{_{(p,I)}^\compl}
\newcommand{\pqmocpl}{_{(p,q-1)}^\compl}
\newcommand{\RGamma}{\mathrm{R}\Gamma}
\newcommand{\RGammagr}{\mathrm{R}\Gamma^\gr}
\newcommand{\Syn}{\mathrm{Syn}}
\newcommand{\Modgr}[1]{\Mod_{#1}(\calD(\BGm))}
\newcommand{\Cathat}{\widehat{\cat{Cat}}}
\newcommand{\DF}{\mathcal{DF}}
\newcommand{\DFcpl}{\widehat{\DF}}
\newcommand{\SpecBGm}{\Spec_{\BGm}}
\newcommand{\SpfBGm}{\Spf_{\BGm}}
\newcommand{\Einfty}{{\bbE_\infty}}
\DeclareMathName{\KK}{K}
\DeclareMathName{\THH}{THH}
\DeclareMathName{\TC}{TC}
\newcommand{\TCm}{\TC^-}
\newcommand{\TPtw}{\TP^{(-1)}}
\DeclareMathName{\TP}{TP}
\DeclareMathName{\HH}{HH}
\DeclareMathName{\HP}{HP}
\DeclareMathName{\HC}{HC}
\DeclareMathName{\MU}{MU}
\DeclareMathName{\KU}{KU}
\DeclareMathName{\ku}{ku}
\newcommand{\Mfg}{\mathcal{M}_\mathrm{fg}}
\newcommand{\Laz}{\mathcal{L}}
\newcommand{\Prism}{{{\mathlarger{\mathbbl{\Delta}}}}}
\newcommand{\Prismhat}{\widehat{\Prism}}
\newcommand{\Prismbar}{\overline{\Prism}}
\newcommand{\vect}{\mathrm{vect}}
\DeclareMathName{\gauge}{Gauge}
\newcommand{\gaugecpl}{\widehat{\gauge}}
\newcommand{\gaugedcpl}{\gaugecpl^\mathrm{dec}}
\newcommand{\Fgauge}{F\mathhyphen\gauge}
\newcommand{\Fgaugecpl}{\widehat{\Fgauge}}
\newcommand{\Fgaugevect}{\Fgauge^\vect}
\DeclareMathName{\Crys}{Crys}
\newcommand{\Fcrysvect}{F\mathhyphen\Crys^\vect}
\newcommand{\hatG}{\widehat{G}}
\newcommand{\GQ}{\hatG^\mathrm{Q}}
\newcommand{\GDr}{\hatG^\mathrm{Dr}}
\newcommand{\GDralg}{G^\mathrm{Dr}}
\newcommand{\aff}{\mathrm{aff}}
\newcommand{\GDraff}{\hatG^\mathrm{Dr,aff}}
\newcommand{\GDraffalg}{G^\mathrm{Dr,aff}}
\newcommand{\Guniv}{\hatG^\mathrm{univ}}
\newcommand{\Nyg}{\mathcal{N}}
\newcommand{\FilN}{\Fil_\Nyg}
\newcommand{\grN}{\gr_\Nyg}
\newcommand{\rmconj}{\mathrm{conj}}
\newcommand{\Filconj}{\Fil^\rmconj}
\DeclareMathName{\mot}{mot}
\newcommand{\Filmot}{\Fil_\mot}
\newcommand{\grmot}{\gr_\mot}
\newcommand{\rmH}{\mathrm{H}}
\newcommand{\calC}{\mathcal{C}}
\newcommand{\calD}{\mathcal{D}}
\newcommand{\calI}{\mathcal{I}}
\newcommand{\calO}{\mathcal{O}}
\newcommand{\scrS}{\mathscr{S}}
\newcommand{\bfB}{\mathbf{B}}
\DeclareMathName{\Perf}{Perf}
\newcommand{\Gm}{\bbG_m}
\newcommand{\BGm}{\bfB\bbG_m}
\newcommand{\bbCP}{\mathbb{CP}}
\newcommand{\CPinfty}{{\bbCP^\infty}}
\newcommand{\Zp}{\bbZ_p}
\newcommand{\Qp}{\bbQ_p}
\newcommand{\Cp}{\bbC_p}
\newcommand{\OCp}{\calO_{\Cp}}
\newcommand{\Cpflat}{\Cp^\flat}
\newcommand{\QdR}{{\Zp\Brk{q-1}}}
\newcommand{\QdRcyc}{{\Zp[q^{1/p^\infty}]\pqmocpl}}
\newcommand{\Zpcyc}{{\Zp^{\mathrm{cyc}}}}
\newcommand{\Zptimes}{{\Zp^\times}}
\newcommand{\chicyc}{\chi_\mathrm{cyc}}
\title{On the Drinfeld formal group}
\author{Deven Manam}
\begin{document}

\maketitle

\vspace{-1.75em}
\begin{abstract}
We identify Drinfeld's formal group on the prismatization of $\Spf \Zp$ with a formal group arising from homotopy theory, given locally by the Quillen formal group of a decompleted variant of topological periodic cyclic homology.
We also prove that this formal group extends to the syntomization of $\Spf \Zp$, and that it admits a certain algebraization conjectured by Drinfeld.
\end{abstract}

\section*{Introduction}

Prismatic cohomology, developed by Bhatt--Morrow--Scholze and Bhatt--Scholze, is a $p$-adic cohomology theory for $p$-adic formal schemes which recovers and unifies most previously known cohomology theories of this type \cite{bms1,bms2,prisms}.
Work of Bhatt--Lurie and Drinfeld elucidates and organizes many aspects of this theory through a ``stacky'' reformulation \cite{apc,blprismatization,fgauges,prismatization}.
This approach associates to any $p$-adic formal scheme $X$ certain (derived) stacks $X^\Prism$, $X^\Nyg$, and $X^\Syn$, sheaves on which provide a good category of coefficients for prismatic, Nygaard-filtered prismatic, and syntomic cohomology of $X$, respectively.

In \cite{drinfeldfg}, Drinfeld introduces a certain formal group over the stack $\Zp^\Prism$, which we denote $\GDr$.
This formal group can be used to construct the prismatic logarithm of \cite[\S 2]{apc} (and the $q$-logarithm of \cite[\S 4]{albtrace}) and is thus related to the theory of Chern classes in prismatic cohomology.

On the other hand, the approach of \cite{bms2} to prismatic cohomology via topological Hochschild homology shows that, for any quasiregular semiperfectoid (QRSP) ring $R$, the ring spectrum $\TP(R; \Zp)$ is even and $2$-periodic, with its $0$\textsuperscript{th} homotopy group recovering $\Prismhat_R$, the Nygaard-completed prismatic cohomology of $R$.
Thus the yoga of complex-orientable cohomology theories supplies us with a Quillen formal group $\Spf \pi_0 \prn*{\TP(R; \Zp)^\CPinfty}$ over $\Spf \Prismhat_R$.
In fact, using the results of \Cref{sec:cplFgauges} we may extend this to define a formal group $\GQ$ over $\Zp^\Syn$, which we (in a minor abuse of terminology) also call the Quillen formal group.
One naturally wonders about the relation between these two formal groups, which leads us to our first theorem.

\begin{theorem}[\Cref{drinfeldquillenfg}, \Cref{drinfeldfgZpSyn}]\label{drinfeldquillenfgintro}
The Quillen formal group is an extension of $\GDr$ along the open immersion $\Zp^\Prism \inj \Zp^\Syn$, uniquely determined by the identification of its cotangent space at the identity with the Breuil--Kisin line bundle $\calO_{\Zp^\Syn}\{1\}$.

As such, for a QRSP ring $R$, there is a natural isomorphism of formal groups
\[ \GDr_{\Prism_R} \isom \Spf \pi_0 \prn*{\TPtw(R; \Zp)^\CPinfty} \mpunc{\raisebox{-1.1ex}{,}} \]
where $\TPtw$ is the Nygaard-decompleted Frobenius-untwist of $\TP$ defined in \Cref{sec:TPtw}.
\end{theorem}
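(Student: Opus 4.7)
The plan is to construct $\GQ$ directly over $\Zp^\Syn$, compute its cotangent space at the identity, and then invoke a rigidity theorem to identify $\GQ|_{\Zp^\Prism}$ with $\GDr$. The explicit formula at a QRSP ring will drop out of the construction.

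For the construction, the approach is quasisyntomic descent: the assignment $R \mapsto \Spf \pi_0(\TPtw(R;\Zp)^\CPinfty)$ for QRSP $R$ defines the Quillen formal group of the even, weakly $2$-periodic $\Einfty$-ring $\TPtw(R;\Zp)$. The point of \Cref{sec:cplFgauges} is that this local data in fact carries the $F$-gauge structure needed to descend to a formal group on the full syntomification $\Zp^\Syn$, and not merely on $\Zp^\Prism$. The cotangent space computation is then standard Quillen theory: locally $\omega_{\GQ,e} \cong \pi_2 \TPtw(R;\Zp)$, and by the defining property of the Frobenius-untwist this is canonically identified with $\Prismhat_R\{1\}$, globalizing to $\calO_{\Zp^\Syn}\{1\}$.

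The identification with $\GDr$ rests on a rigidity statement: a one-dimensional commutative formal group on $\Zp^\Prism$ whose cotangent space is identified with $\calO_{\Zp^\Prism}\{1\}$ is canonically isomorphic to $\GDr$. Combined with the cotangent computation this yields $\GQ|_{\Zp^\Prism} \isom \GDr$; the analogous rigidity on the syntomification, plus the cotangent identification, promotes this isomorphism to a characterization of the extension to $\Zp^\Syn$. The second assertion is then automatic from the construction: $\GQ|_{\Spf \Prism_R}$ is by definition $\Spf \pi_0(\TPtw(R;\Zp)^\CPinfty)$, and the identification with $\GDr_{\Prism_R}$ is the pullback of the global comparison, natural in $R$.

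The main obstacle will be the rigidity/uniqueness statements, both on $\Zp^\Prism$ (pinning down $\GDr$ from its cotangent space) and for the extension along $\Zp^\Prism \inj \Zp^\Syn$; these ultimately amount to $\mathrm{Ext}^1$-vanishing on the relevant stacks and are expected to occupy most of the technical work. A secondary but still substantial step is encoding enough filtered and Frobenius-equivariant data in the local Quillen construction to descend past $\Zp^\Prism$ all the way to $\Zp^\Syn$; this is where the Nygaard-decompletion built into $\TPtw$, rather than the more familiar $\TP$, is essential.
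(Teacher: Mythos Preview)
Your construction of $\GQ$ and the computation of its cotangent space match the paper's approach; the descent machinery of \Cref{sec:cplFgauges} is used exactly as you describe. The gap is in the identification step.

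The rigidity statement you invoke --- that a formal group on $\Zp^\Prism$ with cotangent space $\calO\{1\}$ is automatically $\GDr$ --- is not what the paper proves, and an abstract $\mathrm{Ext}^1$-vanishing argument on $\Zp^\Prism$ is unlikely to succeed: the fiber of $\Mfg \to \BGm$ is far from a point, and there is no reason to expect formal groups over $\Zp^\Prism$ to be determined by their cotangent spaces in general. Instead, the paper passes through \Cref{FgaugephiGammamod}, which says that the composite
\[
\Fgauge^\vect(\Zp) \to \Fcrysvect((\Zp)_\Prism, \calO_\Prism) \to (\phi, G_{\Qp})\text{-}\mathrm{mod}^\vect(W(\Cpflat))
\]
is fully faithful. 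This reduces the comparison to matching two $(\phi, G_{\Qp})$-equivariant formal groups over the single ring $W(\Cpflat)$. The crucial point is that over $W(\Cpflat)$ the element $\mu$ is a \emph{unit}, so both formal groups become isomorphic to $\Gmhat$; and automorphisms of the multiplicative formal group law over a torsion-free ring are determined by their action on the cotangent space (via $x \mapsto (1+x)^a - 1$). Only then does the cotangent computation pin everything down: one checks explicitly, using the identification $\TC(\OCp;\Zp) \simeq \ku\pcpl$ and the known orientation of $\ku$, that the Galois and Frobenius actions on the cotangent line agree with those of $\GDr$.

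So the ``rigidity'' is not a cohomological vanishing on $\Zp^\Prism$ but rather the combination of (i) full faithfulness reducing to $W(\Cpflat)$, (ii) the accident that $\mu$ is invertible there, forcing the formal group to be multiplicative, and (iii) the classical rigidity of $\Gmhat$. The uniqueness of the extension to $\Zp^\Syn$ in \Cref{drinfeldfgZpSyn} uses the same mechanism. Your outline does not locate this, and without it the argument does not close.
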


\begin{remark}
This \namecref{drinfeldquillenfgintro} generalizes results of Morava on certain complex orientations of $\THH(\calO_K)$ for perfectoid fields $K$ \cite{moravathh}, and of Angelini-Knoll on complex orientations of $\TC^-(\calO_L; \bbS[z])$ for $p$-adic local fields $L$ \cite{orientationoftp}.
\end{remark}

We now turn to the question of finding an \emph{algebraization} of the formal group $\GDr$ in the sense of \cite[\S 2.12.1]{drinfeldfg}, i.e.\ of showing that it arises as the completion at the identity section of a (relative) smooth affine group scheme.
Drinfeld shows that the pullbacks of $\GDr$ to the $q$-de Rham prism and along the Frobenius morphism of $\Zp^\Prism$ admit compatible algebraizations, and he conjectures that these algebraizations arise from an algebraization of $\GDr$ itself.
He also observes that any such algebraization is necessarily unique.
Our second main theorem resolves this conjecture.

\begin{theorem}[\Cref{drinfeldfgalg}, {\cite[Conjecture 2.12.4]{drinfeldfg}}]\label{drinfeldfgalgintro}
The formal group $\GDr$ (along with its extension $\GQ$) admits an algebraization $\GDralg$, given after pullback to the $q$-de Rham prism $\QdR$ by
\[ \Spf \QdR[z]\pqmocpl \]
with group structure
\[ z \mapsto z_1 + z_2 + (q-1) z_1 z_2 \mpunc. \]
\end{theorem}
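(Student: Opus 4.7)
The overall strategy is to exploit Drinfeld's observation that such an algebraization is necessarily unique: it suffices to exhibit one. By \Cref{drinfeldquillenfgintro}, $\GDr$ extends to $\GQ$ on $\Zp^\Syn$, so I would actually construct a smooth affine group scheme $\GDralg$ over $\Zp^\Syn$ whose formal completion at the identity section recovers $\GQ$; restricting to $\Zp^\Prism$ then yields an algebraization of $\GDr$.

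The explicit formula on the $q$-de Rham prism identifies the candidate. Setting $u := 1 + (q-1)z$, the law $z \mapsto z_1 + z_2 + (q-1)z_1 z_2$ becomes the standard multiplicative law $u \mapsto u_1 u_2$, and since $u \equiv 1$ modulo $(q-1) = \FilN^1 \calO_{\QdR}$, it is automatically a unit in $\QdR[z]\pqmocpl$. Thus the algebraization on $\QdR$ is realized as the subgroup scheme of $\Gm_{\QdR}$ of units $u$ with $u - 1 \in \FilN^1 \calO$. Globally, we propose to define $\GDralg$ as the analogous subfunctor of $\Gm$ over $\Zp^\Syn$,
\[ \GDralg(T) = \{u \in \Gamma(T, \calO_T)^\times : u - 1 \in \Gamma(T, \FilN^1 \calO_T)\} \mpunc. \]

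The bulk of the work is then to show this subfunctor is representable by a smooth affine group scheme. I would proceed by descent along the cover of $\Zp^\Syn$ by QRSP rings: for each QRSP $R$, the Nygaard filtration on $\Prismhat_R$ is sufficiently well-behaved that, after choosing a local generator of $\FilN^1$, the subfunctor is represented by $\Spf \Prismhat_R[z]\pqmocpl$ with the twisted multiplicative law — matching Drinfeld's explicit algebraization upon specialization to the $q$-de Rham prism. Gluing these local descriptions into a global group scheme should follow by tracking how the group law transforms under changes of local generator of $\FilN^1/\FilN^2 \cong \calO\{1\}$, and is expected to be facilitated by the $F$-gauge machinery developed earlier in the paper.

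Finally, one verifies that the formal completion of $\GDralg$ at $u = 1$ is $\GQ$: its cotangent is $\FilN^1/\FilN^2 = \calO_{\Zp^\Syn}\{1\}$, matching that of $\GQ$ as furnished by \Cref{drinfeldquillenfgintro}, and the pullbacks of $\widehat{\GDralg}$ and $\GQ$ to $\QdR$ agree by design. The hardest step, I expect, will be executing the global construction cleanly: $\FilN^1 \calO_{\Zp^\Syn}$ is not itself a line bundle (only its leading graded piece is), so representability and smoothness of the subfunctor require careful handling of the Nygaard-filtered structure — this is where the bulk of the technical effort will lie, and where the $F$-gauge formalism from the earlier section on complete $F$-gauges should be indispensable.
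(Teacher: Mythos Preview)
Your proposed global definition has a genuine gap. The subfunctor $T \mapsto \{u \in \calO(T)^\times : u - 1 \in \FilN^1 \calO_T\}$ is not well-posed on $\Zp^\Syn$: the Nygaard filtration is a structure on $\Zp^\Nyg$, and under the de Rham immersion $\Zp^\Prism \hookrightarrow \Zp^\Nyg$ it pulls back to the trivial filtration, so on the prismatic locus your subfunctor would be all of $\Gm$. Replacing $\FilN^1$ by the Hodge--Tate ideal $I$ also fails, since over the $q$-de Rham prism $I = ([p]_q)$, not $(q-1)$, and you would recover the wrong formal group. The underlying obstruction is that the coefficient in Drinfeld's law is, after base change to $\OCp^\Syn$, the section $\beta = \log_\Prism \epsilon$ of $\calO\{1\}$; this depends on the choice of $\epsilon$ and does not descend to $\Zp^\Syn$, so there is no global ``$u \equiv 1 \pmod{\beta}$'' condition to impose. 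You correctly flag the gluing as the hard step, but you offer no mechanism beyond ``tracking changes of local generator of $\calO\{1\}$''---and carrying out that descent \emph{is} the content of Drinfeld's conjecture, not a routine bookkeeping exercise that the $F$-gauge formalism handles automatically.

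The paper's argument is entirely different and avoids any direct construction. It combines the graded affineness of $\Mfg$ over $\BGm$ (\Cref{Mfgaffine}) with a Quillen--Lichtenbaum statement (\Cref{quillenlichtenbaum}): modulo $(p^r, v_1^{p^{r-1}})$, the graded syntomic cohomology $\RGammagr(\Zp^\Syn)$ is concentrated in finitely many weights. This weight-boundedness forces the tensor powers of the coordinate ring of $\GDr$ (over the graded affinization) to be already complete (\Cref{bddfgtensorcomplete}), so the Hopf structure on that coordinate ring defines an affine group stack without any completion (\Cref{bddfgalmostalg}). One then passes to the limit over $r$ using the $(p,v_1)$-completeness of $\Zp^\Syn$ (\Cref{v1nilpotent}), and finally verifies by an explicit computation over $\OCp^\Syn$ (where $\RGammagr \isom \Zp[\beta]$ and everything is visible) that the result is a genuine smooth affine algebraization with the stated $q$-de Rham description.
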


Our proof of this result crucially uses the extension of $\GDr$ to a formal group over $\Zp^\Syn$, but it is otherwise essentially independent of \Cref{drinfeldquillenfgintro}.

\begin{remark}
Drinfeld describes a certain section $\tilde{s}_{\QdR}$ of the $q$-de Rham pullback of this algebraic group, given under the description above by sending $z \mapsto 1$.
By \Cref{stildedescent}, this section descends to a section $\tilde{s}$ of $\GDralg$.

Note that the pullback of the $n$-series of the formal group law above along $\tilde{s}_{\QdR}$ is the $q$-analogue $[n]_q$.
As such, we may in some sense regard the section $\tilde{s}$ as providing an absolute prismatic descent of $q$-analogues, via the composites
\[ \Zp^\Prism \xrightarrow{\tilde{s}} \GDralg \xrightarrow{n} \GDralg \mpunc. \]
\end{remark}

\begin{remark}
Lurie has described in \cite{f1syn} a different proof of this conjecture using the stack $\bbF_1^\Syn$, which, like ours, also yields an algebraization over $\Zp^\Syn$.
Over $\Zp^\Prism$ the two algebraizations necessarily coincide, and we expect that they coincide over all of $\Zp^\Syn$.%
\footnote{It seems likely that one can show this using \Cref{drinfeldfgZpSyn}, \cite[\S 2.12.2]{drinfeldfg}, and the explicit description of our algebraization over $\OCp^\Syn$ implicit in the proof of \Cref{drinfeldfgalg}.}

We note that, using some ideas from \cite{f1syn}, one can go in the opposite direction to recover the $v_1$-completed stack $\widehat{\bbF}_1^\Syn$ from the existence of the algebraization (although we do not do so here).
As such, we believe that the algebraization given here, which is defined in much greater generality, will be useful for understanding related ideas in the setting of prismatization of ring spectra, as studied in \cite{dhry}.
\end{remark}

In the course of proving \Cref{drinfeldfgalgintro}, we make the following observation about the moduli stack of formal groups.

\begin{proposition}[\Cref{Mfgaffine}, \Cref{affdiagaffineness}]
Let $\calO(1) \in \calD(\Mfg)$ denote the the cotangent space of the universal formal group at the identity section, and write $\RGammagr(\Mfg) \defeq \RGamma(\Mfg, \calO(*))$ for the global sections of all its powers, regarded as either a graded $\Einfty$-ring or a graded derived ring.
Then the natural map of stacks
\[ \Mfg \to \SpecBGm \RGammagr(\Mfg) \mpunc, \]
where the target is defined as in \Cref{gradedderivedrings}, is an equivalence.

In other words, $\Mfg$ is a relative affine stack over $\BGm$, for the map $\Mfg \to \BGm$ classifying the cotangent space of the universal formal group.
\end{proposition}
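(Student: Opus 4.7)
The plan is to reduce the claim to affineness of the pullback of $\Mfg$ along the cover $\Spec \bbZ \to \BGm$, and then to exploit the pro-unipotent structure of the strict isomorphism group of formal group laws.

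First, by base change, it suffices to show that the pullback $X \defeq \Mfg \times_{\BGm} \Spec \bbZ$ is an affine (derived) stack, with global sections $\RGamma(X, \calO)$ whose residual $\Gm$-equivariant structure recovers $\RGammagr(\Mfg)$. Here $X$ is the moduli of formal groups equipped with a trivialization of the cotangent line at the identity; classically it admits the presentation $X = [\Spec L / H]$, where $L$ is the Lazard ring and $H = \Spec \bbZ[b_1, b_2, \ldots]$ is the affine group scheme of strict isomorphisms of formal group laws (coordinate changes $x \mapsto x + b_1 x^2 + b_2 x^3 + \cdots$).

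The key structural input is that $H$ is pro-unipotent: it is an inverse limit of finite-dimensional unipotent algebraic groups $H_n$ obtained by truncating strict isomorphisms in degree $n$. For such groups, To\"en's theory of affine stacks identifies $BH$ as an affine stack, and more generally realizes quotients $[Y/H]$ of affine $Y$ as affine stacks whose derived global sections fully recover $[Y/H]$. Applied to $X = [\Spec L / H]$, this gives $X \equiv \Spec L^{hH}$; reinstating the $\Gm$-grading then yields $\Mfg \equiv \SpecBGm R$ with $R = \RGammagr(\Mfg)$. I expect the paper to first establish a clean general criterion for relative affineness over $\BGm$ in terms of affine diagonal plus suitable cohomological conditions (presumably the content of \Cref{affdiagaffineness}), and then apply it to $\Mfg$, reducing the main verification to the structural input just described.

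The main obstacle is making this pro-unipotent argument rigorous in the derived framework of the paper, since $H$ is infinite-dimensional and $L$ is infinitely generated. I expect one reduces to the finite-dimensional truncations $H_n$ acting on the corresponding truncations of $L$, and uses the polynomial presentation of $L$ (via Lazard's theorem) together with the compatibility of the $\Gm$-grading to ensure that higher cohomology contributions vanish in each graded weight, so that the derived and classical $H$-invariants agree and assemble correctly in the limit.
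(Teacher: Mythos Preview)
Your strategy via the pro-unipotence of the strict isomorphism group $H$ is sound and is essentially the alternative proof the paper mentions in a remark (attributed to Lurie). The paper's main proof, however, takes a different route. It first shows (\Cref{MfgLfiltration}) that the pushforward $\underline{\Laz}$ of the structure sheaf along $\SpecBGm \Laz \to \Mfg$ carries a filtration whose successive quotients are filtered colimits of iterated extensions of powers of the tautological bundle; this filtration is precisely the one coming from your tower of torsors under the unipotent truncations $H_n$, so the unipotent structure enters, but repackaged. This is then used, together with the identification of $\calD(\Mfg)^\heart$ with comodules over the Hopf algebroid $(\Laz, \Laz^{(2)})$ and results on Hovey's stable comodule category, to prove (\Cref{MfgQCohaffine}) that $\gamma_*$ and $\gamma^*$ restrict to an equivalence on coconnective objects and that $\Laz$ is flat over $\RGammagr(\Mfg)$. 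Affineness itself is then deduced by a \v{C}ech-nerve argument: flatness forces $\SpecBGm \Laz \to \SpecBGm \RGammagr(\Mfg)$ to be fpqc, so both $\Mfg$ and $\SpecBGm \RGammagr(\Mfg)$ are realized as colimits of the same \v{C}ech nerve $\SpecBGm \Laz^{(\bullet)}$. What your approach buys is conceptual directness; what the paper's approach buys is that it works uniformly over $\bbZ$ without invoking affine-stack machinery as a black box (To\"en's theory, and even the Mondal--Reinecke extension the paper cites, are stated over a field, so your reduction step would need additional justification in mixed characteristic).

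Two small corrections to your expectations. First, \Cref{affdiagaffineness} is not a general affineness criterion: it is a remark that for relative geometric stacks with affine diagonal, relative affineness is insensitive to whether one uses graded $\Einfty$-rings or graded derived rings. Second, the paper does \emph{not} base-change to the total stack over $\Spec \bbZ$; it works throughout relative to $\BGm$ with graded module categories, which is what allows the weight-by-weight finiteness arguments to go through cleanly.
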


The result above is in some sense well-known, and related statements appear in work on Hovey's stable module category \cite{hoveystable,bhv} and on the ``cofiber of $\tau$'' philosophy \cite{cofibtau,synthetic,Cmmf,mfgorconn}.
However, we expect that the formulation given here will be of independent utility.

\subsection*{Acknowledgments}
I am grateful to Greg Andreychev, Ben Antieau, Sanath Devalapurkar, Rok Gregoric, Adam Holeman, Dhilan Lahoti, Akhil Mathew, Arpon Raksit, and Noah Riggenbach for many interesting conversations related to this work.
I am also grateful to Bhargav Bhatt, Jeremy Hahn, and especially Ben Antieau for helpful comments on a draft.

I would also like to thank Jacob Lurie for explaining an alternative proof of \Cref{MfgQCohaffine} (see \Cref{jacobaffineness}).

A different strategy for proving the second part of \Cref{drinfeldfgalgintro} is studied in work-in-progress of Devalapurkar, Hahn, Raksit, and Yuan \cite{dhry}.

During the writing of this paper, the author was supported in part by NSF grant DMS-2102010.

\subsection*{Organization}
In \Cref{sec:TPtw} we define the variant $\TPtw$ of $\TP$.

In \Cref{sec:cplFgauges} we study a variant of prismatic $F$-gauges having to do with Nygaard-completed prismatic cohomology.

In \Cref{sec:gradedstacks} we recall some facts about stacks over $\BGm$ and prove relative affineness of $\Mfg$.

In \Cref{sec:drinfeldquillenfg}, which is mostly independent of \Cref{sec:gradedstacks}, we construct the Quillen formal group and show that it extends the Drinfeld formal group.

In \Cref{sec:algebraization} we construct the conjectured algebraization of the Drinfeld formal group.

\subsection*{Conventions}

We let $\Cp$ denote the completion of an algebraic closure of $\Qp$, and we fix a compatible system of $p$-power roots of unity $\epsilon \in T_p \Cp^\times$.
We define as in \cite[\S 3.2]{bms1} the elements $\mu \defeq [\epsilon] - 1$, $\xi \defeq \frac{\mu}{\phi^{-1}(\mu)}$, and $\tilde\xi \defeq \phi(\xi)$ of $\Prism_{\OCp}$.
Note that this induces a map of prisms
\[ (\QdRcyc, ([p]_q)) \to (\Prism_{\OCp}, (\tilde\xi)) \]
sending $q - 1 \mapsto \mu$.

When working with a prism $A$, we will often write $I$ for the Hodge--Tate ideal of $A$, leaving the dependence on $A$ implicit.

We abbreviate ``quasiregular semiperfectoid'' to ``QRSP''.

All formal groups are assumed to be $1$-dimensional and commutative.
For brevity, we refer to the cotangent space of a formal group at the identity section simply as the cotangent space of the formal group.

Our gradings and filtrations are all $\bbZ$-indexed unless specified otherwise.

We make use of $\infty$-categories through the theory of quasicategories developed by Boardman--Vogt, Joyal, and Lurie.
Our primary references for this theory are \cite{htt} and \cite{ha}.

Our categories are all $\infty$-categories, and our stacks are all fpqc $\infty$-stacks.
However, our schemes are ordinary schemes, and our rings by default ordinary commutative rings.

In order to reduce ambiguity when working with both t-structures and gradings, we denote the connective and coconnective parts of a t-structure on a stable category $\calC$ by $\tau_{\geq 0} \calC$ and $\tau_{\leq 0} \calC$ respectively.
As we have no need to denote the truncation of $\calC$ itself, the conflict of notation does not present a problem for us.

Given a stack $X$, we define the derived category $\calD(X)$ by right Kan extension from affine schemes, as in \cite{dag}.
This category carries a $t$-structure whose connective objects are those that are connective after pullback to an arbitrary affine scheme.
Note that, for this category to be well-defined in complete generality, we must either fix a cutoff cardinal on our fpqc site or regard $\calD(X)$ as generally living only in a larger universe.
We opt for the latter, implicitly noting that, for the stacks of actual interest to us, the derived category is well-defined in the smaller universe.

\section{A variant of \texorpdfstring{$\TP$}{TP}}\label{sec:TPtw}

\begin{recollection}[{\cite[\S 5.7]{apc}}, {\cite[\S 7.4]{apc}}]
For any animated commutative ring $R$ we have divided Frobenius maps
\[ \phi_i : \FilN^i \Prism_R \{i\} \to \Prism_R \{i\} \mpunc, \]
which are compatible with the Nygaard filtration on the source and the $I$-adic filtration on the target.
Since the $I$-adic filtration is complete, these maps factor canonically through the Nygaard completion:
\[ \FilN^i \Prism_R \{i\} [2i] \to \FilN^i \Prismhat_R \{i\} \xrightarrow{\tilde\phi_i} \Prism_R \{i\} \mpunc. \]
\end{recollection}

We need the following variant of \cite[Corollary 5.2.16]{apc} and \cite[\S 5.5.1, 3(d)]{fgauges}; the proof follows the sketch given in \emph{loc.\ cit.}.

\begin{proposition}\label{divFroblocalization}
For any QRSP ring $R$, the divided Frobenius map
\[ \tilde\phi : \bigoplus_i \FilN^i \Prismhat_R \{i\} t^{-i} \to \bigoplus_i \Prism_R \{i\} t^{-i} \]
is a graded $(p, I)$-complete localization.
\end{proposition}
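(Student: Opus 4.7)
My plan is to follow the sketch of \cite[Corollary 5.2.16]{apc} and \cite[\S 5.5.1, 3(d)]{fgauges}, adapted to the graded QRSP setting. I would organize both sides as graded $(p,I)$-complete $\Einfty$-algebras: the source $A = \bigoplus_i \FilN^i \Prismhat_R\{i\} t^{-i}$ is the Rees algebra of the Nygaard-completed, Breuil--Kisin-twisted filtration, and the target $B = \bigoplus_i \Prism_R\{i\} t^{-i}$ is the corresponding ``constant'' graded algebra; multiplicativity of $\phi$ makes $\tilde\phi : A \to B$ a map of graded $\Einfty$-algebras.

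The crux is to exhibit $B$ as the graded $(p,I)$-complete localization of $A$ at a single degree-$1$ element. Concretely, I would pick $u \in \FilN^1 \Prismhat_R\{1\}$ whose image $\tilde\phi_1(u) \in \Prism_R\{1\}$ is a generator: since $R$ is QRSP, after $(p,I)$-completion $\FilN^1 \Prismhat_R\{1\}$ is invertible and $\tilde\phi_1$ is surjective onto the invertible module $\Prism_R\{1\}$, so such a $u$ exists. The universal property of localization then produces a factorization $A \to A[u^{-1}]\pIcpl \to B$, and the remaining task is to show the second map is an equivalence.

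For this I would argue degreewise. Inverting $u$ in the Rees picture has the effect of trivializing the Nygaard filtration, and together with the fact that divided Frobenius induces $(p,I)$-complete equivalences on the graded pieces (essentially $\phi : \FilN^i \Prismhat_R \xrightarrow{\sim} I^i \Prism_R$ after $(p,I)$-completion) this should give $A[u^{-1}]\pIcpl_{(i)} \simeq \Prism_R\{i\}$. Because $R$ is QRSP, both sides are discrete, so the identification may be verified on the universal example using the animated-prism description of $\Prismhat_R$ from \cite[\S 5]{apc}, or by reduction to the perfect prism case via a faithfully flat cover where the divided Frobenius structure is transparent.

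The main obstacle is managing the interplay of graded $(p,I)$-complete localization, Nygaard completion, and the Breuil--Kisin twists. In particular, one must verify that inverting $u$ and then $(p,I)$-completing is compatible with Nygaard completion on the source, and that the various twist shifts line up consistently across degrees. These compatibilities are formal consequences of working throughout in the $\infty$-category of graded $(p,I)$-complete $\Einfty$-algebras, but the explicit bookkeeping---tracking how $\FilN^i\{i\}$ in the source matches $\Prism_R\{i\}$ in the target under the localization---is where the real work lies.
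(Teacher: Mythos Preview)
Your overall shape matches the paper's: pick a degree-$1$ element of the source whose image in the target is invertible, factor $\tilde\phi$ through the graded $(p,I)$-complete localization at that element, and prove the second map is an equivalence. The paper does exactly this, choosing the element to come from a chosen perfectoid $R_0 \to R$.

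The gap is in your justification of the equivalence $A[u^{-1}]\pIcpl \simeq B$. You assert that ``divided Frobenius induces $(p,I)$-complete equivalences on the graded pieces (essentially $\phi : \FilN^i \Prismhat_R \xrightarrow{\sim} I^i \Prism_R$ after $(p,I)$-completion).'' This is false for non-perfectoid QRSP rings: if each $\tilde\phi_i$ were individually an equivalence after $(p,I)$-completion, then $\Prismhat_R$ and $\Prism_R$ would agree, which they do not in general (e.g.\ for $R = \OCp/p$). Modulo $I$, the image of $\FilN^i$ under divided Frobenius is only the $i$th stage $\Filconj_i \Prismbar_R$ of the conjugate filtration, not all of $\Prismbar_R$. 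So your proposed degreewise check cannot work as stated, and the suggested ``reduction to the perfect prism case via a faithfully flat cover'' does not help, since a general QRSP ring is not covered by a perfectoid in a way that makes this statement true.

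What actually happens after inverting $u$ is that the degree-$i$ piece of the source becomes a \emph{colimit} $\colim_n \FilN^{i+n}$ along multiplication by the chosen generator, and it is only this colimit that maps isomorphically to $\Prism_R\{i\}$ after completion. The paper executes this by passing to the $t$-adic filtration: both sides are $t$-complete (the target by $I$-adic completeness, the source because $e^p \in (p,I)$ once $et^{-1}$ is inverted), so one reduces modulo $t$ and must show that $\colim_e \grN^{i+\bullet} \Prismhat_R \to \Prismbar_R\{i\}$ is a $p$-complete equivalence. By \cite[Theorem~12.2]{prisms} each term maps injectively with image $\Filconj_{i+\bullet} \Prismbar_R\{i\}$, and exhaustivity of the conjugate filtration finishes. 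This conjugate-filtration step is the missing idea in your sketch; without it, the identification $A[u^{-1}]\pIcpl \simeq B$ is exactly the content of the proposition and remains unproven.
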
 \begin{proof}
Since the Breuil--Kisin line bundle over $R$ is trivializable, this is equivalent to showing the same statement for the (non-divided) Frobenius map
\[ \bigoplus_i \FilN^i \Prismhat_R t^{-i} \to \bigoplus_i I^i \Prism_R t^{-i} \mpunc, \]
which, as a result of \cite[Construction 2.2.14]{apc}, is obtained from $\tilde\phi$ by applying the Breuil--Kisin twisting by $-i$ in each weight $i$.
Choose a perfectoid ring $R_0$ mapping to $R$, and let $e \in \FilN^1 \Prismhat_{R_0}$ be a generator, so that $\phi(e)$ is a generator of $I$.
As the image of $e t^{-1}$ is invertible we obtain a map
\[ \prn*{\bigoplus_i \FilN^i \Prismhat_R t^{-i}}\brk*{\frac{1}{e t^{-1}}}\pIcpl \to \bigoplus_i I^i \Prism_R t^{-i} \mpunc, \]
which we claim is an isomorphism.
The $t$-adic filtration is complete on both sides above; for the target this follows from completeness of the $I$-adic filtration, and for the source this follows from the fact that $e = t \cdot e t^{-1}$ satisfies $e^p \in (p, I)$.
Thus we may check on the associated graded of the $t$-adic filtration, and as the graded pieces are the same up to shift we may simply check modulo $t$.
Unwinding everything, we need to show that for each $i$ the map
\[ \colim_e \grN^{i+\bullet} \Prismhat_R \to \Prismbar_R\{i\} \]
induced by the Frobenius is a $p$-complete equivalence.
Here the map
\[ \grN^{i+j} \Prismhat_R \to \Prismbar_R\{i\} \]
is given by taking the Frobenius, which lands in $\Prismbar_R\{i+j\}$, and multiplying by $\phi(e)^{-j}$.
Thus by \cite[Theorem 12.2]{prisms} it is injective, with image agreeing with $\Filconj_i \Prismbar_R\{i\}$ up to multiplication by a unit.
As this unit comes from $\Prismbar_{R_0}$, it lies in $\Filconj_0 \Prismbar_R$, so the image is in fact exactly $\Filconj_i \Prismbar_R\{i\}$.
Now exhaustivity of the conjugate filtration \cite[Construction 7.6]{prisms} allows us to conclude.
\end{proof}

In light of the above proposition, we define, for any QRSP $R$, the subset
\[ \scrS_R \subseteq \bigoplus_i \FilN^i \Prismhat_R \{i\} t^{-i} \]
to be the set of homogeneous elements with invertible image under $\tilde\phi$.
We may now make the following definition, lifting the previous \namecref{divFroblocalization} to spectra.

\begin{definition}
For $R$ QRSP, we may identify
\[ \pi_{2i} \TCm(R; \Zp) \isom \FilN^i \Prismhat_R \{i\} \]
and thus, using \Cref{loccomplfamily}, define a commutative ring spectrum
\[ \TPtw(R; \Zp) \defeq \prn*{\scrS_R^{-1} \TCm(R; \Zp)}\pIcpl \]
functorially in $R$.%
\footnote{To be precise, we perform this construction over the category of $\kappa$-small QRSP rings for each cardinal $\kappa$, then pass to the colimit.}
This spectrum has vanishing odd homotopy groups, and by \Cref{divFroblocalization} its even homotopy groups satisfy
\[ \pi_{2i} \TPtw(R; \Zp) \isom \Prism_R\{i\} \mpunc. \]
We also define the motivic filtration
\[ \Filmot^\bullet \TPtw(R; \Zp) \defeq \tau_{\geq 2\bullet} \TPtw(R; \Zp) \mpunc. \]
As the associated graded of the motivic filtration satisfies quasisyntomic descent, so does $\TPtw$, so we may extend this construction by descent to all quasisyntomic rings.
We thus obtain a sheaf of complete filtered commutative ring spectra
\[ \Filmot^\bullet \TPtw(-; \Zp) \]
on the quasisyntomic site such that
\[ \grmot^* \TPtw(R; \Zp) \equiv \Prism_R\{i\} \mpunc. \]
\end{definition}

\begin{remark}
The notation $\TPtw$ reflects the fact that this object should be viewed not just as a decompletion of $\TP$ but also as a Frobenius untwist.
This is clearer in the relative setting, where the motivic filtration on $\TP$ only recovers the Nygaard-completed Frobenius twist of prismatic cohomology, while performing the analogue of the construction above recovers prismatic cohomology on the nose.
Indeed, this construction is essentially a coordinate-free version of the ``Frobenius descent'' of \cite[\S 11.3]{bms2}.
\end{remark}

\begin{remark}
There is by construction a canonical filtered factorization
\[ \TCm(-; \Zp) \to \TPtw(-; \Zp) \to \TP(-; \Zp) \]
of the Frobenius map.
The second map above exhibits the graded pieces of the target as the Nygaard-completions of those of the source.
\end{remark}

\begin{remark}\label{TPtwdecompleted}
There is a variant of the above construction defined for all animated rings, which one can define by taking the completion of the tensor product
\[ \TCm(-; \Zp) \tensor_{\TCm(\Zp; \Zp)} \TP(\Zp; \Zp) \]
with respect to a certain secondary filtration, or by taking the same completion of the left Kan extension of $\TP(-; \Zp)$ from polynomial rings to all animated rings.
The motivic filtration on this variant is not necessarily complete, even on quasisyntomic rings.
This object is studied in forthcoming work of Devalapurkar, Hahn, Raksit, and Yuan \cite{dhry}.
\end{remark}

\section{Completed $F$-gauges}\label{sec:cplFgauges}

\DeclareMathName{\fndec}{dec}
\DeclareMathName{\fncpl}{cpl}

We first recall the definition of a prismatic gauge and of a prismatic $F$-gauge.

\begin{definition}[{\cite[Example 6.1.7]{fgauges}}, {\cite[Definition 2.14]{tangsyntomic}}]
For $R$ QRSP, we define the category of \emph{prismatic gauges} over $R$ to be the $(p, I)$-complete filtered derived category
\[ \gauge(R) \defeq \DF(\FilN^\bullet \Prism_R)\pIcpl \]
of the filtered ring $\FilN^\bullet \Prism_R$.%
\footnote{
  Recall that the filtered derived category of a filtered ring $\Fil^\bullet A$ is the category of modules over $\Fil^\bullet A$ in the category of filtered complexes $\DF(\bbZ) \defeq \Fun(\bbZ_{\geq}, \calD(\bbZ))$.
  For an ideal $I \subseteq A$, the $I$-complete filtered derived category consists of those objects of $\DF(\Fil^\bullet A)$ each of whose filtered pieces is $I$-complete.
}

A \emph{prismatic $F$-gauge} over $R$ consists of a prismatic gauge $\Fil^\bullet E$ and an equivalence of $I^\bullet \Prism_R$-modules
\[ \phi_{E} : \prn*{\Fil^\bullet E \tensor_{\FilN^\bullet \Prism_R} I^\bullet \Prism_R}\pIcpl \to I^\bullet E \mpunc, \]
where in the source we take the $(p, I)$-complete tensor product.
We denote the category of prismatic $F$-gauges over $R$ by $\Fgauge(R)$.
\end{definition}

\begin{remark}
By \cite[Theorem 5.5.10]{fgauges} and \cite[Example 6.1.7]{fgauges}, the category of prismatic gauges, resp.\ $F$-gauges, defined above agrees with the derived category of the stack $R^\Nyg$, resp.\ $R^\Syn$, as defined in \cite{prismatization} and \cite{fgauges}.
\end{remark}

\begin{remark}
By \cite[Proposition 2.29]{frobheight}, the categories above are sheaves for the quasisyntomic topology on QRSP rings, so we may sensibly extend the definitions by descent to all quasisyntomic rings.
Bhatt and Lurie have announced that the constructions $(-)^\Nyg$ and $(-)^\Syn$ send quasisyntomic covers of quasicompact quasisyntomic formal schemes to fpqc covers, so this definition via descent agrees with the stacky definition \cite[Remark 5.5.18]{fgauges}.%
\footnote{As explained to the author by Bhargav Bhatt, this can be deduced from the analogous statements for the prismatization and the Hodge stack, the former of which is \cite[Lemma 6.3]{blprismatization}.}
\end{remark}

We now introduce some variants of these definitions involving Nygaard completion.

\begin{definition}
For $R$ QRSP, we define the category of \emph{complete prismatic gauges} over $R$ to be the $(p, I)$-complete complete filtered derived category
\[ \gaugecpl(R) \defeq \DFcpl(\FilN^\bullet \Prismhat_R)\pIcpl \]
of the filtered ring $\FilN^\bullet \Prismhat_R$.
\end{definition}

\begin{remark}\label{gaugecplRees}
The category of complete prismatic gauges over a QRSP $R$ may be equivalently defined as the $t$-complete graded derived category of the Rees algebra
\[ \bigoplus_i \FilN^i \Prismhat_R t^{-i} \mpunc. \]
Thus, given a $t$-complete graded module over
\[ \bigoplus_i \FilN^i \Prismhat_R \{i\} t^{-i} \]
(arising, e.g., from $\TCm$) we obtain a prismatic gauge by undoing the Breuil--Kisin twists as in \cite[Remark 5.5.16]{fgauges}.
\end{remark}

\begin{definition}
For $R$ QRSP, a \emph{decompleted complete prismatic gauge} over $R$ consists of a complete prismatic gauge $\Fil^\bullet \widehat{E}$ together with a $\Prism_R$-module $E$ and a map of $\Prism_R$-modules
\[ E \to \widehat{E} \mpunc. \]
We let $\gaugedcpl(R)$ denote the category of decompleted complete prismatic gauges over $R$.
\end{definition}

\begin{remark}
Given a decompleted complete prismatic gauge $(\Fil^\bullet \widehat{E}, E)$ over a QRSP ring $R$, we obtain a natural filtration on $E$ by pullback, making it a filtered module over $\FilN^\bullet \Prism_R$.
This defines a ``decompletion'' functor
\[ \fndec : \gaugedcpl(R) \to \gauge(R) \mpunc. \]
This functor has a left adjoint $\fncpl$ sending a gauge to its completion along with the natural map to the completion from the original underlying module.
\end{remark}

\begin{lemma}\label{gaugedcplequiv}
The functors $\fncpl$ and $\fndec$ are equivalences.
\end{lemma}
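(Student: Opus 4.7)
The plan is to verify directly that the unit and counit of the adjunction $\fncpl \dashv \fndec$ are equivalences. Both checks rely on the defining behavior of the Nygaard completion, namely that the completion of a filtered module $\Fil^\bullet E$ preserves successive quotients ($\grN^i E \isom \grN^i \widehat{E}$) and finite-level quotients ($E/\Fil^i E \isom \widehat{E}/\Fil^i \widehat{E}$).

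For the unit at $\Fil^\bullet E \in \gauge(R)$: applying $\fncpl$ yields the triple $(\Fil^\bullet \widehat{E}, E, E \to \widehat{E})$, and then $\fndec$ returns the filtered module $\Fil^i \widehat{E} \times_{\widehat{E}} E$. The unit map $\Fil^i E \to \Fil^i \widehat{E} \times_{\widehat{E}} E$ is an equivalence because both sides are canonically the fiber of a map out of $E$: the source of $E \to E/\Fil^i E$, the target of $E \to \widehat{E}/\Fil^i \widehat{E}$ (via the composite through $\widehat{E}$). These two targets agree by the defining property above, so the fibers agree.

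For the counit at $(\Fil^\bullet \widehat{E}, E, f) \in \gaugedcpl(R)$: applying $\fndec$ produces the filtered module $\Fil^i E' \defeq \Fil^i \widehat{E} \times_{\widehat{E}} E$, and I would verify (i) that its Nygaard completion recovers $\Fil^\bullet \widehat{E}$, and (ii) that its underlying module is $E$. For (i), the fiber of each projection $\Fil^i E' \to \Fil^i \widehat{E}$ is the $i$-independent fiber of $E \to \widehat{E}$, so $\grN^i E' \isom \grN^i \widehat{E}$; since $\Fil^\bullet \widehat{E}$ is already complete, it is therefore the completion of $\Fil^\bullet E'$. For (ii), $\colim_{i \to -\infty} \Fil^i E' \simeq \widehat{E} \times_{\widehat{E}} E = E$, using that the filtration on $\widehat{E}$ is constant in sufficiently negative degrees.

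The main technical issue is a setup point rather than a genuine obstacle: one must confirm that the pullback $\Fil^i \widehat{E} \times_{\widehat{E}} E$ carries a canonical $\FilN^\bullet \Prism_R$-module structure, rather than merely a $\FilN^\bullet \Prismhat_R$-module structure, using the given $\Prism_R$-structure on $E$ together with the filtered ring map $\FilN^\bullet \Prism_R \to \FilN^\bullet \Prismhat_R$. With this module structure in hand, both verifications above are formal consequences of the stated defining properties of Nygaard completion.
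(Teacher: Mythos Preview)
Your approach is essentially the same as the paper's: both arguments check the unit and counit directly, with the key observation being that the map from the pullback filtration $\Fil^i E' = \Fil^i \widehat{E} \times_{\widehat{E}} E$ to $\Fil^i \widehat{E}$ has fiber independent of $i$, so the associated gradeds agree. The paper is much terser (it declares the unit an isomorphism ``from the definitions'' and reduces the counit to your point (i)), but the content is the same.

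One small correction to your justification of (ii): the filtration on a general complete prismatic gauge $\Fil^\bullet \widehat{E}$ need \emph{not} be constant in sufficiently negative degrees --- completeness imposes no such condition, and the paper's filtrations are $\bbZ$-indexed. The conclusion $\colim_{i \to -\infty} \Fil^i E' \simeq E$ is nevertheless correct, but for a different reason: by definition the underlying module $\widehat{E}$ is $\colim_i \Fil^i \widehat{E}$, and in a stable category filtered colimits commute with fiber products, so
\[
\colim_i \bigl(\Fil^i \widehat{E} \times_{\widehat{E}} E\bigr) \;\simeq\; \bigl(\colim_i \Fil^i \widehat{E}\bigr) \times_{\widehat{E}} E \;\simeq\; \widehat{E} \times_{\widehat{E}} E \;\simeq\; E \mpunc.
\]
With this fix your argument goes through. (The paper leaves (ii) implicit, presumably for exactly this reason.)
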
 \begin{proof}
It follows from the definitions that the unit of the adjunction is an isomorphism, and that the counit is an isomorphism exactly on those objects $(\Fil^\bullet \widehat{E}, E)$ such that the map
\[ E \to \widehat{E} \]
exhibits $\widehat{E}$ as the completion of $E$ with respect to the pullback filtration.
But this is always true, as this map will always induce an equivalence on graded pieces.
\end{proof}

We now introduce the Frobenius.

\begin{definition}
For $R$ QRSP, a \emph{complete prismatic $F$-gauge} over $R$ consists of a decompleted complete prismatic gauge $(\Fil^\bullet \widehat{E}, E)$ over $R$ along with an equivalence of complete filtered $I^\bullet \Prism_R$-modules
\[ \phi_E : \prn*{\Fil^\bullet \widehat{E} \tensor_{\FilN^\bullet \Prismhat_R} I^\bullet \Prism_R}_{(p,I),\Fil}^\compl \bij I^\bullet E \mpunc, \]
where in the source we take the $(p, I)$-complete, filtration-complete tensor product.
We let $\Fgaugecpl(R)$ denote the category of complete prismatic $F$-gauges over $R$.
\end{definition}

\begin{remark}
If $(\Fil^\bullet E, \phi_E)$ is a prismatic $F$-gauge over a QRSP ring $R$, then the filtered module
\[ \prn*{\Fil^\bullet E \tensor_{\FilN^\bullet \Prism_R} I^\bullet \Prism_R}\pIcpl \]
is by assumption complete, so the natural map
\[ \prn*{\Fil^\bullet E \tensor_{\FilN^\bullet \Prism_R} I^\bullet \Prism_R}\pIcpl \to \prn*{\Fil^\bullet \widehat{E} \tensor_{\FilN^\bullet \Prismhat_R} I^\bullet \Prism_R}_{(p,I),\Fil}^\compl \]
is an equivalence.
We thus obtain a completion functor
\[ \Fgauge(R) \to \Fgaugecpl(R) \]
compatible with that on gauges.
\end{remark}

The following result is now immediate from \Cref{gaugedcplequiv}.

\begin{corollary}\label{fgaugedcplff}
For any QRSP $R$, the completion functor
\[ \Fgauge(R) \to \Fgaugecpl(R) \]
is exact and fully faithful.
\end{corollary}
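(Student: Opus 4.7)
The plan is to bootstrap from \Cref{gaugedcplequiv}, which identifies $\gauge(R) \simeq \gaugedcpl(R)$ via $\fncpl$ and $\fndec$, together with the observation recorded in the remark immediately preceding the corollary. The completion functor on $F$-gauges is manifestly compatible with the completion functor $\fncpl$ on underlying gauges, so the task reduces to comparing the Frobenius data attached to corresponding pairs of objects.

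For this, fix $(\Fil^\bullet E, \phi_E) \in \Fgauge(R)$ and write $\Fil^\bullet \widehat{E} \defeq \fncpl(\Fil^\bullet E)$. The source of $\phi_E$, namely
\[ \prn*{\Fil^\bullet E \tensor_{\FilN^\bullet \Prism_R} I^\bullet \Prism_R}\pIcpl \mpunc, \]
is $(p,I)$-complete by definition and is moreover filtration-complete, since the Hodge--Tate filtration on $I^\bullet \Prism_R$ is complete. Hence the natural base-change map from this object to the doubly completed tensor product
\[ \prn*{\Fil^\bullet \widehat{E} \tensor_{\FilN^\bullet \Prismhat_R} I^\bullet \Prism_R}_{(p,I),\Fil}^\compl \]
is an equivalence, as pointed out in the preceding remark. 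Thus specifying $\phi_E$ is literally the same as specifying a Frobenius datum on the complete gauge $\Fil^\bullet \widehat{E}$ paired with the decompleting map $E \to \widehat{E}$.

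Both conclusions of the corollary then follow formally. For fully faithfulness, since $\fncpl$ is an equivalence on underlying gauges and since, by the previous step, the Frobenius-compatibility conditions on morphisms on either side match under this equivalence, the induced map on mapping spaces between any two objects of $\Fgauge(R)$ is an equivalence. For exactness, the forgetful functors from $F$-gauges to their underlying gauges are conservative and preserve finite limits and colimits on both sides, while the completion functor on underlying gauges is itself an equivalence. There is no substantive obstacle here: the content of the statement sits entirely in the filtration-completeness observation of the preceding remark, with the corollary a formal consequence of that observation and \Cref{gaugedcplequiv}.
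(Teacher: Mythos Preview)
Your argument is correct and is exactly the paper's approach: the paper records the corollary as immediate from \Cref{gaugedcplequiv} together with the preceding remark, and you have simply spelled out that deduction. One small imprecision: the filtration-completeness of $\prn*{\Fil^\bullet E \tensor_{\FilN^\bullet \Prism_R} I^\bullet \Prism_R}\pIcpl$ is not a consequence of completeness of $I^\bullet \Prism_R$ alone (tensoring with a complete object need not yield a complete object), but rather holds because $\phi_E$ identifies this object with $I^\bullet E$, which is complete by $(p,I)$-completeness of $E$---this is what the remark means by ``by assumption complete.''
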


\begin{remark}
One can go a step further and consider, for a QRSP $R$, the category of pairs $(\widehat{E}, \phi_{\widehat{E}})$ with $\widehat{E}$ a complete prismatic gauge over $R$ and $\phi_{\widehat{E}}$ a map
\[ \phi_{\widehat{E}} : \Fil^\bullet \widehat{E} \to I^\bullet \widehat{E} \]
linear over the filtered Frobenius morphism $\FilN^\bullet \Prismhat_R \to I^\bullet \Prismhat_R$.
It is not hard to see that this category contains $\Fgaugecpl(R)$, and hence $\Fgauge(R)$, as a full subcategory.
One can view this as a categorification of \cite[Proposition 7.4.6]{apc}.

However, since the objects arising from $\THH$ naturally land in $\Fgaugecpl(R)$, this larger category is unnecessary for us.
It would allow us to define the Quillen formal group on $\Zp^\Syn$ without having to construct $\TPtw$, but, as the Quillen formal group of $\TPtw(R; \Zp)$ is the object which agrees with the Drinfeld formal group over $\Spf \Prism_R$, we are interested in constructing it regardless.
\end{remark}

\begin{remark}[The stacky approach]\label{completedsyntomization}
\newcommand{\Reeshat}{\widehat{\mathcal{R}}}
The constructions of this section are somewhat ad-hoc and perhaps not the best approach in general, although they suffice for our purposes.
A better approach might be to define for QRSP $R$ an analytic stack (in the sense of Clausen--Scholze \cite{analyticstacks}) $R^{\widehat{\Syn}}$ via a pushout diagram (roughly) of the form
\[\xymatrix{
\Spa\prn*{\Reeshat\prn*{\FilN^\bullet \Prismhat_R}\brk*{\frac{1}{t}}, \Reeshat\prn*{\FilN^\bullet \Prismhat_R}} / \Gm \sqcup \Spa(\Prism_R) \ar[d]^{j_\mathrm{dR} \sqcup j_\mathrm{HT}} \ar[r] & \Spa(\Prism_R) \ar[d] \\
\Spa\prn*{\Reeshat\prn*{\FilN^\bullet \Prismhat_R}} / \Gm \ar[r] & R^{\widehat{\Syn}}
\mpunc,
}\]
where we define the completed Rees ring $\Reeshat(\Fil^\bullet A)$ as
\[ \Reeshat(\Fil^\bullet A) \defeq \prn*{\bigoplus_i \Fil^i A \cdot t^{-i}}_t^\compl \mpunc{\raisebox{-3ex}{.}} \]
It seems likely to the author that the natural map $R^{\widehat{\Syn}} \to R^\Syn$ will be an equivalence.
Indeed, this should follow from the statement that the natural diagram of analytic stacks
\[\xymatrix{
\Spa\prn*{\Reeshat\prn*{\FilN^\bullet \Prismhat_R}\brk*{\frac{1}{t}}, \Reeshat\prn*{\FilN^\bullet \Prismhat_R}} \ar[d] \ar[r] & \Spa\prn*{\bigoplus_i \Prism_R} \ar[d] \\
\Spa\prn*{\Reeshat\prn*{\FilN^\bullet \Prismhat_R}} \ar[r] & \Spa\prn*{\bigoplus_i \FilN^i \Prism_R t^{-i}}
}\]
is a pushout, which should in turn follow from the Beauville--Laszlo-like descent result of \cite[Lecture 11, 1:30:21]{analyticstacks}.
\end{remark}

\section{Graded stacks and graded formal groups}\label{sec:gradedstacks}

Recall that the derived category of the stack $\BGm$ identifies symmetric monoidally with the category of graded objects in $\calD(\bbZ)$, as described (over the sphere spectrum) in \cite{tasosa1modgm}.
Motivated by this, we make the following definition.

\begin{definition}
A \emph{graded stack} is a stack $X$ equipped with a map $X \to \BGm$.
For a graded stack $X$, we refer to the pullback
\[ X \times_{\BGm} * \]
as its \emph{total stack}, and we refer to the line bundle classified by $X \to \BGm$ as the \emph{tautological line bundle}.
We say that $X$ is a \emph{graded scheme}, resp.\ \emph{affine graded scheme}, if its total stack is a scheme, resp.\ affine scheme.
We also define the \emph{graded global sections} $\RGammagr(X)$ to be the pushforward of the unit in $\calD(X)$ along the map $X \to \BGm$.
\end{definition}

\begin{example}
If $A$ is a graded ring, we may regard it as a ring over $\BGm$, so that its relative spectrum
\[ \SpecBGm A \]
defines an affine graded scheme.
In this case, its total stack is just $\Spec A$.

Similarly, if $A$ is a graded ring whose underlying ring is equipped with the adic topology for an ideal $I$ generated by finitely many homogeneous elements, we may define the relative formal spectrum
\[ \SpfBGm A \defeq \colim_n \SpecBGm A/I^n \mpunc, \]
whose total stack is $\Spf A$.%
\footnote{Although this presentation uses the ideal $I$, one may check that the stack obtained depends only on the topology.}
\end{example}

\begin{recollection}\label{gradedderivedrings}
The graded global sections $\RGammagr(X)$ of a graded stack $X$ naturally obtain the structure of an $\Einfty$-ring in $\calD(\BGm)$.
In fact, one can do better than this: descent from the affine case shows that $\RGammagr(X)$ naturally lifts to an object of Mathew's category of \emph{graded derived (commutative) rings}, as developed in \cite[\S 4]{arponhh}.

We denote this category by $\DAlg_{\BGm}$, and given a graded derived ring $A$ we write $\DAlg_A$ for the under category $\prn*{\DAlg_{\BGm}}_{A/}$.
The functor
\[ \RGammagr : \Stk_{\BGm} \to \DAlg_{\BGm}^\opp \]
admits a right adjoint $\SpecBGm$, given explicitly by
\[ \SpecBGm A = \Map_{\DAlg_{\BGm}}(A, -) \mpunc. \]
Note that this restricts to the usual relative spectrum for ordinary graded rings.
\end{recollection}

\begin{definition}
We say that a graded stack is \emph{affine} if the unit map
\[ X \to \SpecBGm \RGammagr(X) \]
is an equivalence.
\end{definition}

\begin{remark}
The notion of an affine stack was first studied by Toën, in the setting of cosimplicial commutative rings \cite{champsaffines}.
\end{remark}

\begin{remark}
In order for this notion of affineness to be well-behaved, we would like the functor $\SpecBGm$ to be fully faithful on the full subcategory of coconnective graded derived rings (i.e.\ those whose underlying complex is coconnective).
One can reduce this to the analogous result in the ungraded setting, which has been announced by Mathew and Mondal \cite{mmaffstk}.
\end{remark}

\begin{construction}
If $R$ is a coconnective graded derived ring, we define a t-structure on the category of graded modules $\Modgr{R}$ by taking the connective objects to be generated under extensions and colimits by the grading-shifts $R(n)$ for $n \in \bbZ$.
This determines an accessible t-structure by \cite[Proposition 1.4.4.11]{ha}, whose coconnective subcategory consists of those modules whose underlying complex is coconnective.
Note that this t-structure is compatible with filtered colimits, i.e.\ coconnectivity is preserved under filtered colimits, and that it is compatible with the symmetric monoidal structure, in the sense of \cite[Example 2.2.1.3]{ha}.
\end{construction}

\begin{remark}
If $X$ is a graded stack, the cohomology functor
\[ \calD(X) \to \calD(\BGm) \]
factors through a lax symmetric monoidal functor $\gamma_*$ landing in the category of graded modules over $\RGammagr(X)$:
\[ \calD(X) \xrightarrow{\gamma_*} \Modgr{\RGammagr(X)} \to \calD(\BGm) \mpunc. \]
This functor has a symmetric monoidal left adjoint
\[ \gamma^* : \Modgr{\RGammagr(X)} \to \calD(X) \]
characterized by the property that, for any graded ring $A$ and map
\[ f : \SpecBGm A \to X \mpunc, \]
the composite $f^* \gamma^*$ identifies functorially with the tensor product $(-) \tensor_{\RGammagr(X)} A$.

Note that the functors $\gamma_*$ and $\gamma^*$ restrict to a symmetric monoidal equivalence between the thick subcategory of $\calD(X)$ generated by powers of the tautological bundle and the thick subcategory of $\Modgr{\RGammagr(X)}$ generated by grading-shifts of the unit.
Note also that, by the argument of \cite[Proposition 7.2.4.2]{ha}, $\Modgr{\RGammagr(X)}$ is compactly generated, and this thick subcategory is exactly the subcategory of compact objects.
\end{remark}

\begin{example}
A \emph{formal group} $\hatG$ on a stack $X$ is an abelian group object (in the sense of \cite[Definition 1.2.4]{ellcoh1}) in the category of stacks over $X$, such that the underlying pointed stack of $\hatG$ is locally on $X$ isomorphic to $(\hat\bbA^1, 0)$.

The moduli stack of formal groups $\Mfg$ admits a natural map to $\BGm$ classifying the cotangent space of the universal formal group.
The total stack of $\Mfg$ is the stack of formal groups equipped with trivializations of their cotangent spaces (often called first-order coordinates).
\end{example}

\begin{remark}
If $X$ is a graded stack, the mapping space (over $\BGm$)
\[ \Mfg(X) \]
identifies with the space of \emph{graded formal groups} over $X$, i.e.\ of pairs consisting of a formal group $\hatG$ over $X$ and an identification of the cotangent space of $\hatG$ with the tautological line bundle.
\end{remark}

\begin{example}\label{quillenfggeneral}
If $E$ is a complex-orientable ring spectrum $E$ with $\pi_{2*} E$ a commutative ring, its \emph{Quillen formal group} is the graded formal group
\[ \SpfBGm \pi_{2*} \prn*{E^\CPinfty} \]
over $\SpecBGm \pi_{2*} E$, with underlying graded ring the even $E$-cohomology of $\CPinfty$.
Here the group structure is induced from that on $\CPinfty$, and the topology from its CW filtration.
\end{example}

\begin{definition}
For a graded ring $A$, a \emph{graded formal group law} over $A$ is a formal group law
\[ f(x, y) = \sum_{i, j \geq 0} a_{i j} x^i y^j \]
over $A$ such that each $a_{i j}$ is homogeneous of degree $i + j - 1$.
An isomorphism of graded formal group laws $f$ and $g$ is a composition-invertible power series
\[ p(x) = \sum_{i \geq 1} b_i x^i \]
such that $b_1 = 1$, each $b_i$ is homogeneous of degree $i - 1$, and
\[ f(p(x), p(y)) = p(g(x, y)) \mpunc. \]
\end{definition}

\begin{example}\label{MfgMUL}
A celebrated theorem of Quillen \cite{quillenmu} states that the even homotopy ring $\Laz \defeq \pi_{2*} \MU$ of complex cobordism is the Lazard ring, which classifies graded formal group laws.
The associated Quillen formal group over $\SpecBGm \Laz$ is the underlying graded formal group of the universal graded formal group law, and its classifying map
\[ \SpecBGm \Laz \to \Mfg \]
is a fpqc covering.

Further, the ring $\Laz^{(n)} \defeq \pi_{2*} \MU^{\tensor n}$ classifies sequences of graded formal group laws $f_1, \dots, f_n$ together with an isomorphism between $f_i$ and $f_{i+1}$ for each $i$.
These rings together define a cosimplicial graded ring, and the diagram
\[ \SpecBGm \Laz^{(\bullet)} \to \Mfg \]
is a Čech nerve (and thus a colimit diagram).
\end{example}

\begin{definition}
Let $R$ be a graded ring and $\hatG$ a graded formal group over $R$.
Then a \emph{coordinate} on $\hatG$ is an isomorphism of pointed graded $R$-stacks
\[ \hatG \equiv \SpfBGm R\Brk{t} \]
with $t$ in weight $-1$, such that the induced isomorphism of cotangent spaces at the origin agrees with the given isomorphism between the cotangent space of $\hatG$ and the tautological line bundle.
Note that choosing a coordinate on $\hatG$ is equivalent to choosing a lift of $\hatG$ to a graded formal group law.

An \emph{$n$\textsuperscript{th}-order coordinate} on $\hatG$ is an pointed isomorphism between the $n$\textsuperscript{th} order neighborhood of the identity in $\hatG$ and $\SpecBGm R\Brk{t}/(t^n)$, satisfying the same condition on the cotangent space.
\end{definition}

We now turn towards the main goal of this section, which is to show that $\Mfg$ is an affine graded stack.
In order to do this, we will first study the adjunction between $\calD(\Mfg)$ and $\Modgr{\RGammagr(\Mfg)}$.
For this we will need the following technical result, which essentially states that the relative structure sheaf of $\SpecBGm \Laz$ over $\Mfg$ is built from very well-behaved pieces in a very well-behaved way.

\begin{proposition}\label{MfgLfiltration}
Let $\underline{\Laz}$ denote the $\calO_{\Mfg}$-algebra arising as the pushforward of the structure sheaf of $\SpecBGm \Laz$.
There is a a $\bbZ_{\geq 1}$-indexed sequence of objects $\underline{\Laz}_i \in \calD(\Mfg)$ such that:
\begin{enumerate}
\item there is an identification
\[ \colim_i \underline{\Laz}_i \equiv \underline{\Laz} \mpunc; \]
\item the object $\underline{\Laz}_1$ identifies with the structure sheaf $\calO_{\Mfg}$; and
\item for each $i \geq 1$, there is a cofiber sequence
\[ \underline{\Laz}_i \to \underline{\Laz}_{i+1} \to \overline{\Laz}_i \mpunc, \]
where $\overline{\Laz}_i$ is a filtered colimit of objects each of which is an iterated extension of powers of the tautological line bundle.
\end{enumerate} \end{proposition}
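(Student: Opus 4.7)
The plan is to exhibit $\SpecBGm \Laz \to \Mfg$ as a cofiltered limit of a tower of line-bundle torsors and to push down the canonical polynomial-degree filtrations they carry. Let $\omega \in \calD(\Mfg)$ denote the tautological line bundle (i.e.\ the cotangent space of the universal formal group $\hatG$), and let $p_n \colon P_n \to \Mfg$ be the stack classifying $n$-th order coordinates on $\hatG$. Then $P_1 = P_2 = \Mfg$ (a 1st-order coordinate is vacuous, and a 2nd-order coordinate is determined by the cotangent-space trivialization already built into a graded formal group), and for each $n \geq 2$ the transition map $g_{n+1} \colon P_{n+1} \to P_n$ is a torsor for $p_n^* \omega^n$, since lifting an $n$-th order coordinate to an $(n+1)$-th order coordinate is a torsor for $\calI^n/\calI^{n+1} \simeq \omega^n$ (with $\calI \subset \calO_{\hatG}$ the ideal of the identity section). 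Finally, $\SpecBGm \Laz \simeq \lim_n P_n$, since a coordinate is precisely a coherent system of $n$-th order coordinates.

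Set $\underline{\Laz}_n \defeq (p_n)_* \calO_{P_n}$. Property (2) holds tautologically, and property (1) follows from $\SpecBGm \Laz \simeq \lim_n P_n$ together with the compatibility of affine pushforward with cofiltered limits of affine morphisms. For (3), the $p_n^* \omega^n$-torsor $g_{n+1}$ equips $g_{n+1,*} \calO_{P_{n+1}}$ with its canonical polynomial-degree filtration $F^\bullet$, in which $F^0 \simeq \calO_{P_n}$ and the $k$-th associated graded piece is $p_n^* \omega^{-nk}$ for $k \geq 1$. Pushing down along $p_n$ (affine, hence exact on quasi-coherent sheaves) and applying the projection formula produces an exhaustive filtration on $\underline{\Laz}_{n+1}$ whose $F^0$ is $\underline{\Laz}_n$ and whose $k$-th associated graded piece is $\omega^{-nk} \otimes \underline{\Laz}_n$; hence $\overline{\Laz}_n$ admits an exhaustive filtration with these same associated gradeds for $k \geq 1$.

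To close, I prove by induction on $n$ the stronger assertion that each $\underline{\Laz}_n$ itself admits an exhaustive $\bbZ_{\geq 0}$-filtration whose associated graded pieces are all powers of $\omega$. The base cases $n = 1, 2$ are trivial. In the inductive step, I refine the filtration $F^\bullet$ on $\underline{\Laz}_{n+1}$ by inserting, within each graded piece $\omega^{-nk} \otimes \underline{\Laz}_n$, the inductive filtration (twisted by $\omega^{-nk}$), and ordering the combined filtration lexicographically; this yields an exhaustive filtration on $\underline{\Laz}_{n+1}$ whose graded pieces are all powers of $\omega$. Each $\overline{\Laz}_n$ then inherits such a filtration and is thereby exhibited as the filtered colimit of its finite initial segments---each a finite iterated extension of powers of $\omega$---as required. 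The main obstacle is the bookkeeping in this inductive step, combining the two filtrations into a single exhaustive filtration with the asserted graded pieces; secondarily, one must verify that the polynomial-degree filtration is globally well-defined on a non-trivial line-bundle torsor, which follows from the translation-invariance of polynomial degree.
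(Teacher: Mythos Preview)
Your construction is essentially the same as the paper's: both define $\underline{\Laz}_n$ as the pushforward of the structure sheaf of the moduli of $n$th-order coordinates on the universal graded formal group. The difference lies in how property (3) is verified. The paper observes that each $P_n \to \Mfg$ is itself a torsor for a \emph{vector bundle} $V_n$ (the kernel of $\Fil^1 A/\Fil^{n+1}A \to \calO(1)$), so $\underline{\Laz}_n$ carries a polynomial-degree filtration with associated graded $\Sym^\bullet V_n^\dual$; since $V_n$ has a finite filtration with line-bundle graded pieces, each $\Sym^k V_n^\dual$ is already a finite iterated extension of powers of $\omega$, and the cofiber $\overline{\Laz}_n$ acquires a $\bbZ_{\geq 0}$-filtration whose graded pieces are such finite extensions directly. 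Your route instead climbs the tower one step at a time via line-bundle torsors $P_{n+1} \to P_n$ and runs an induction on $n$, which is conceptually pleasant but forces you to combine two infinite filtrations at each stage.

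That combination is where your argument has a small gap. Ordering the outer (polynomial-degree) and inner (inductive) filtrations lexicographically produces a filtration indexed by $\bbZ_{\geq 0} \times \bbZ_{\geq 0}$ in lex order, which has order type $\omega^2$; its \emph{finite} initial segments are not cofinal, so their colimit is only $F^0$, not all of $\underline{\Laz}_{n+1}$. The fix is to weaken the inductive hypothesis to ``$\underline{\Laz}_n$ is a filtered colimit of finite iterated extensions of powers of $\omega$'': this class is closed under extensions and filtered colimits (it is the ind-completion of the thick subcategory generated by the $\omega^k$), so the inductive step goes through without any reindexing. With that adjustment your proof is complete; the paper's vector-bundle-torsor argument simply sidesteps the issue by producing finite filtrations on the nose.
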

\begin{proof}
For each $i$ we may define an $\Mfg$-stack $X_i$ classifying $i$\textsuperscript{th}-order coordinates on the universal graded formal group, so that
\[ \SpecBGm \Laz = \lim_i X_i \mpunc. \]
We also write $A$ for the coordinate ring over $\Mfg$ of the universal formal group, which comes with a decreasing filtration $\Fil^\bullet A$ by order.
An $i$\textsuperscript{th}-order coordinate is then a splitting of the map
\[ \Fil^1 A / \Fil^{i+1} A \to \calO(1) \mpunc, \]
so $X_i$ is a torsor for the vector bundle
\[ V_i \defeq \ker\prn*{\Fil^1 A / \Fil^{i+1} A \to \calO(1)} \mpunc. \]
In particular, it is affine, and we define $\underline{\Laz}_i$ to be its coordinate ring.

It remains to check that the $\underline{\Laz}_i$ have the claimed properties.
For (1), note that $\Spec_{\Mfg} \underline{\Laz}$ classifies coordinates on the universal graded formal group, so we have an $\Mfg$-isomorphism
\[ \Spec_{\Mfg} \underline{\Laz} \isom \lim_i X_i \mpunc, \]
and thus, as each of these stacks is a relative affine scheme, an isomorphism
\[ \underline{\Laz} \isom \colim_i \underline{\Laz}_i \mpunc. \]
Property (2) is clear from the definitions, so it remains to check property (3).
For this, note first that as each $X_i$ is a torsor for $V_i$ functorially in $i$, the sheaf $\underline{\Laz}_i$ admits a natural increasing filtration by degree whose associated graded is the symmetric algebra $\Sym V_i^\dual$.
Thus it will suffice to see that, for each $i$ and $k$, the sheaf
\[ \cofib\prn*{\Sym^k V_i^\dual \to \Sym^k V_{i+1}^\dual} \]
admits a finite filtration whose associated graded is a finite sum of powers of the tautological bundle.
Note that the $V_i$ come equipped with functorial finite filtrations arising from the order filtration on $A$, for which the associated graded of the map $V_{i+1} \to V_i$ is the projection
\[ \bigoplus_{n=2}^{i+1} \calO(n) \surj \bigoplus_{n=2}^{i} \calO(n) \mpunc. \]
Thus the map
\[ \Sym^k V_i^\dual \to \Sym^k V_{i+1}^\dual \]
admits a finite filtration whose associated graded is the inclusion
\[ \Sym^k \bigoplus_{n=1}^i \calO(-n) \inj \Sym^k \bigoplus_{n=1}^{i+1} \calO(-n) \mpunc, \]
so the induced filtration on the cofiber has the desired property.
\end{proof}

\begin{proposition}\label{DMfgprops}\begin{enumerate}
\item
  The pullback functors
  \[ \calD(\Mfg) \to \calD(\SpecBGm \Laz) \to \calD(\Laz) \]
  are t-exact.
  Further, an object of $\calD(\Mfg)$ is connective, resp.\ coconnective, if and only if it is connective, resp.\ coconnective, after pullback to $\calD(\Laz)$.
\item
  The $t$-structure on $\calD(\Mfg)$ is left complete.
\item
  The natural map
  \[ \QCoh(\Mfg) \to \calD(\Mfg) \]
  induces an equivalence
  \[ \QCoh(\Mfg) \equiv \calD(\Mfg)^\heart \mpunc. \]
\item
  The natural map
  \[ \calD(\QCoh(\Mfg)) \to \calD(\Mfg) \]
  restricts to an equivalence of coconnective subcategories, and thus exhibits the target as the left completion of the source.
\end{enumerate}\end{proposition}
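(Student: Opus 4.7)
The plan is to prove the four parts in order (1), (3), (2), (4), using throughout the fpqc descent presentation $\calD(\Mfg) \equiv \lim_{[n]} \calD\prn*{\SpecBGm \Laz^{(n+1)}}$ arising from the Čech nerve of the cover $\pi: \SpecBGm \Laz \to \Mfg$ of \Cref{MfgMUL}. The face maps of this Čech nerve are base changes of $\pi$ and hence flat, so pullback along them is t-exact, and the t-structure on the limit is then pointwise in the sense that an object is connective, resp.\ coconnective, iff it is so at every simplicial level.

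For (1), t-exactness of $\pi^*$ is immediate. For conservativity: if $\pi^*\calF$ is connective, resp.\ coconnective, then pullback along any face map preserves the property, so every component of $\calF$ in the limit lies in the corresponding half of the t-structure, whence so does $\calF$ itself. The further forgetful functor $\calD(\SpecBGm \Laz) \to \calD(\Laz)$ is pullback along the $\Gm$-torsor $\Spec \Laz \to \SpecBGm \Laz$; it is t-exact and jointly conservative because a graded $\Laz$-module is connective, resp.\ coconnective, iff its underlying ungraded module is. For (3), the heart of a limit of stable $\infty$-categories along t-exact functors with pointwise t-structure is computed levelwise, giving
\[ \calD(\Mfg)^\heart \equiv \lim_{[n]} \QCoh\prn*{\SpecBGm \Laz^{(n+1)}} \equiv \QCoh(\Mfg) \]
by fpqc descent for quasicoherent sheaves.

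For (2), each $\calD(\SpecBGm \Laz^{(n+1)})$ is the derived category of graded modules over an ordinary graded ring and is therefore left complete. For $\calF \in \calD(\Mfg)$ with component $\calF_n$ at simplicial level $n$, the t-exactness of the transition functors implies that the truncation $\tau_{\leq m} \calF$ has component $\tau_{\leq m} \calF_n$ at level $n$. Since limits in $\calD(\Mfg)$ are computed pointwise and each $\calF_n \equiv \lim_m \tau_{\leq m} \calF_n$ by levelwise left completeness, we deduce the desired equivalence $\calF \equiv \lim_m \tau_{\leq m} \calF$.

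Finally, for (4), part (3) together with the universal property of the derived category of a Grothendieck abelian category yields a canonical t-exact functor $F: \calD(\QCoh(\Mfg)) \to \calD(\Mfg)$ inducing the identity on hearts. Full faithfulness on coconnective parts follows from the standard argument via injective resolutions in $\QCoh(\Mfg)$. For essential surjectivity, note that by (1) the t-structure on $\calD(\Mfg)$ is right separated (an object is zero iff its pullback to $\calD(\Laz)$ is), so any $X \in \calD(\Mfg)_{\leq 0}$ satisfies $X \equiv \colim_n \tau_{\geq -n} X$; each $\tau_{\geq -n} X$ is bounded and hence lies in the essential image of $F$, and the claim follows since $F$ preserves filtered colimits. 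The main subtlety I anticipate is justifying the pointwise characterization of the t-structure on the limit $\calD(\Mfg)$ directly from its right Kan extension definition, which reduces to flatness of the face maps of the Čech nerve combined with the standard fact that limits of t-structures along t-exact functors have pointwise connective and coconnective halves.
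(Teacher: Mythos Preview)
Your proposal is correct and takes essentially the same approach as the paper: both use the flat \v{C}ech cover $\SpecBGm \Laz^{(\bullet)} \to \Mfg$ and deduce the properties from descent along t-exact transition functors. The paper simply cites the relevant results in Gaitsgory--Rozenblyum (Propositions I.3.1.5.4, I.3.2.4.3 and Lemma I.3.1.5.8 of \cite{gr1}, noting that the proofs there, though stated for Artin stacks in characteristic~$0$, go through unchanged), whereas you unpack those arguments directly. One small implicit step in your argument for (4): the passage from right separatedness to $X \equiv \colim_n \tau_{\geq -n} X$ uses that coconnective objects are closed under filtered colimits, which holds here because the transition functors in the limit diagram preserve colimits and each $\calD(\SpecBGm \Laz^{(n+1)})$ has this property, but you may want to make that explicit.
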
 \begin{proof}
Part (1) follows from \cite[Proposition I.3.1.5.4]{gr1}; part (2) from part (1), \Cref{MfgMUL}, and \cite[Lemma I.3.1.5.8]{gr1}; and part (4) from part (3) and \cite[Proposition I.3.2.4.3]{gr1}.%
\footnote{Note that, although the cited results are stated only for Artin stacks in characteristic $0$, the proofs given work just as well in the cases of interest.}
Part (3) follows from part (1) and descent.
\end{proof}

\begin{proposition}\label{MfgQCohaffine}\begin{enumerate}
\item
  The functor
  \[ \gamma_* : \calD(\Mfg) \to \Modgr{\RGammagr(\Mfg)} \]
  and its left adjoint $\gamma^*$ restrict to an equivalence of coconnective subcategories.
  As such, $\gamma^*$ identifies $\calD(\Mfg)$ with the left completion of $\Modgr{\RGammagr(\Mfg)}$.
\item
  The object $\Laz \in \Modgr{\RGammagr(\Mfg)}$ is flat, i.e.\ the tensor product with $\Laz$ is t-exact.
\end{enumerate}\end{proposition}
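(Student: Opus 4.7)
The plan is to leverage the resolution of $\underline{\Laz}$ from \Cref{MfgLfiltration} together with the \v{C}ech descent of \Cref{MfgMUL}, building on the equivalence (noted in the excerpt) between the thick subcategory of $\calD(\Mfg)$ generated by powers of the tautological line bundle and the compact objects of $\Modgr{\RGammagr(\Mfg)}$. First I would show that the counit $\gamma^* \gamma_* \underline{\Laz} \to \underline{\Laz}$ is an equivalence, by induction on the tower of \Cref{MfgLfiltration}: the base case $\underline{\Laz}_1 = \calO_{\Mfg}$ is handled by the preexisting equivalence, and the inductive step using the cofiber sequences reduces the claim to the same statement for each $\overline{\Laz}_i$. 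Since each $\overline{\Laz}_i$ lies in the heart and is a filtered colimit of iterated extensions of powers of the tautological bundle, and since $\gamma^*$ preserves colimits while $\gamma_*$ commutes with these particular bounded filtered colimits and cofiber sequences in the heart, the equivalence propagates. Passing to the colimit in $i$ yields the statement for $\underline{\Laz}$ itself, and the analogous argument applies to the pushforwards $\underline{\Laz^{(n)}}$ to $\Mfg$ of the structure sheaves of $\SpecBGm \Laz^{(n)}$ from the \v{C}ech nerve of $\SpecBGm \Laz \to \Mfg$.

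For part (1), I would then apply $\gamma^* \gamma_*$ to the \v{C}ech descent presentation of $M \in \calD(\Mfg)$ afforded by \Cref{MfgMUL},
$$ M \simeq \lim_{[n] \in \Delta} M \tensor_{\calO_{\Mfg}} \underline{\Laz^{(n+1)}} . $$
For $M$ coconnective this totalization is uniformly coconnective, so it converges well on both sides --- using left completeness (\Cref{DMfgprops}(2)) on the $\calD(\Mfg)$ side and the analogous left completeness on $\Modgr{\RGammagr(\Mfg)}$ coming from compact generation --- yielding $\gamma^* \gamma_* M \simeq M$. The unit equivalence $N \to \gamma_* \gamma^* N$ for coconnective $N$ follows by a dual cobar-resolution argument, and the identification of $\calD(\Mfg)$ with the left completion of $\Modgr{\RGammagr(\Mfg)}$ is then formal.

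For part (2), the equivalence of coconnective subcategories from (1) identifies tensor with $\Laz \in \Modgr{\RGammagr(\Mfg)}$ with tensor with $\underline{\Laz} \in \calD(\Mfg)$, the latter being pullback along $\SpecBGm \Laz \to \Mfg$ followed by pushforward. Both operations are t-exact (pullback by flatness of this fpqc cover, pushforward by its affineness), so tensor with $\Laz$ is t-exact, establishing flatness. The main obstacle will be rigorously justifying that $\gamma_*$ behaves compatibly with the bounded-below totalizations and filtered colimits in the descent argument --- precisely controlling its interaction with the t-structures in the uniformly coconnective range --- which should follow from left completeness together with the cohomological-amplitude bounds provided by \Cref{MfgLfiltration}.
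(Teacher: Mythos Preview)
Your approach has a genuine circularity. The \v{C}ech descent step --- passing $\gamma^*$ through the totalization $\lim_{\Delta} \gamma_*(M \otimes \underline{\Laz^{(\bullet+1)}})$ --- requires $\gamma^*$ to preserve coconnectivity, so that the fiber of the map to each partial totalization remains highly coconnective after applying $\gamma^*$ and left completeness of $\calD(\Mfg)$ can be invoked. But you only establish this via (2), and your argument for (2) presupposes that $N \otimes_{\RGammagr(\Mfg)} \Laz$ already lies in the coconnective range where the equivalence (1) applies, which is exactly the nontrivial half of (2). Note also that $\Mfg$ has infinite cohomological dimension (the Adams--Novikov $E_2$-page is unbounded), so there are no ``cohomological-amplitude bounds'' for $\gamma_*$ in the sense you suggest; the filtered-colimit compatibility of $\gamma_*$ on $\tau_{\leq 0}$ does hold, but it comes from the uniform coconnectivity of the \v{C}ech presentation of $\RGamma(\Mfg,-)$, not from \Cref{MfgLfiltration}. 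Separately, your ``dual cobar-resolution argument'' for the unit is underspecified: even granting that the counit is an equivalence on all coconnective $M \in \calD(\Mfg)$, you still need conservativity of $\gamma^*$ on $\tau_{\leq 0}\Modgr{\RGammagr(\Mfg)}$, and this requires its own argument.

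The paper proceeds in a different logical order and via a different mechanism for full faithfulness. It identifies $\Modgr{\RGammagr(\Mfg)}$ with Hovey's stable comodule category $\Stable_{(\Laz,\Laz^{(2)})}$ and invokes the full faithfulness of $\calD(\Comod_{(\Laz,\Laz^{(2)})}) \hookrightarrow \Stable_{(\Laz,\Laz^{(2)})}$ from \cite{bhv,synthetic}, combined with \Cref{DMfgprops}(4), to obtain full faithfulness of $\gamma_*|_{\tau_{\leq 0}}$ directly. It then proves the coconnective-preservation half of (2) \emph{before} finishing (1), via \Cref{MfgLfiltration} and the filtered-colimit compatibility of $\gamma_*|_{\tau_{\leq 0}}$; this is what yields that $\gamma^*$ preserves coconnectivity. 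Finally, it establishes conservativity of $\gamma^*$ on $\tau_{\leq 0}$ by a separate induction on the tower of \Cref{MfgLfiltration}. Your strategy could plausibly be repaired by adopting this order --- prove (2) first, then run the \v{C}ech descent --- but the conservativity step is not optional and is absent from your outline.
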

Part (1) of the above result seems to be closely related to affineness in general; the equivalent result for (ungraded) pointed connected affine stacks over a field has been shown by Lurie in characteristic $0$ \cite{dag8} and by Mondal and Reinecke in arbitrary characteristic \cite{unipotent}.
\begin{proof}
Let $T$ denote the thick subcategory of $\calD(\Mfg)$ generated by powers of the tautological bundle, so that $\gamma_*$ induces an equivalence
\[ \Ind(T) \isom \Modgr{\RGammagr(\Mfg)} \mpunc. \]
Note that by \Cref{DMfgprops}, \cite[\S 3.4]{naumannmfg}, and \cite[Corollary 6.7]{hoveystable}, $T$ identifies with the thick subcategory of $\calD(\Comod_{(\Laz, \Laz^{(2)})})$ generated by the dualizable objects.
Thus $\Ind(T)$ identifies with the stable module category $\Stable_{(\Laz, \Laz^{(2)})}$ of \cite[Definition 4.9]{bhv}.
We may factor $\gamma_*$ as
\begin{align*}
\calD(\Mfg)
  &\to \calD(\QCoh(\Mfg))
\\&\equiv \calD(\Comod_{(\Laz, \Laz^{(2)})})
\\&\to \Stable_{(\Laz, \Laz^{(2)})}
\\&\equiv \Modgr{\RGammagr(\Mfg)} \mpunc,
\end{align*}
so that the first arrow is an equivalence on coconnective objects by \Cref{DMfgprops} and the second is fully faithful, as shown implicitly in \cite[\S 4]{bhv} and explicitly in \cite[\S 3.2]{synthetic}.
As $\gamma_*$ preserves coconnective objects, we find that it restricts to a fully faithful functor
\[ \gamma_*|_{\tau_{\leq 0}} : \tau_{\leq 0} \calD(\Mfg) \to \tau_{\leq 0} \Modgr{\RGammagr(\Mfg)} \mpunc. \]

To conclude the proof of statement (1) it remains to check that the left adjoint
\[ \tau_{\leq 0} \gamma^*|_{\tau_{\leq 0}} : \tau_{\leq 0} \Modgr{\RGammagr(\Mfg)} \to \tau_{\leq 0} \calD(\Mfg) \]
of the functor above is conservative, and that $\gamma^*$ preserves coconnectivity.
In order to accomplish this, we first note that $\gamma_*|_{\tau_{\leq 0}}$ commutes with filtered colimits.
Indeed, this follows from the above discussion along with \cite[Corollary 3.8]{synthetic}, \cite[Proposition 2.16]{synthetic}, and compatibility of the t-structure on $\Modgr{\RGammagr(\Mfg)}$ with filtered colimits.%
\footnote{Note that our category $\Stable_{(\Laz, \Laz^{(2)})}$ corresponds to the full subcategory of Pstrągowski's on the objects concentrated in even weights.}

We now prove statement (2).
This will imply that the composite
\[ \Modgr{\RGammagr(\Mfg)} \to \calD(\Mfg) \to \calD(\Laz) \mpunc, \]
which is given by the tensor product, preserves coconnective objects (as coconnectivity in the source and target are both detected on underlying complexes), and thus by \Cref{DMfgprops} that $\gamma^*$ does as well.

We first show that the tensor product with $\Laz$ preserves coconnective objects, i.e.\ that if the underlying complex of $M \in \Modgr{\RGammagr(\Mfg)}$ is coconnective then so is the underlying complex of $M \tensor_{\RGammagr(\Mfg)} \Laz$.
Write $\Laz_i \defeq \gamma_* \underline{\Laz}_i$, where $\underline{\Laz}_i$ is defined as in \Cref{MfgLfiltration}.
As $\gamma_*|_{\tau_{\leq 0}}$, the forgetful functor to complexes, and the homology groups of a complex commute with filtered colimits, it suffices to see that $M \tensor_{\RGammagr(\Mfg)} \Laz_i$ is coconnective for each $i$.
This is clear for $i = 1$, and the mentioned compatibility with colimits implies that $M \tensor_{\RGammagr(\Mfg)} \epsilon_* \overline{\Laz}_i$ is coconnective for each $i$, so the statement follows by induction.

To conclude the proof of statement (2) we need to see that the tensor product preserves connective objects.
This follows from the definition of the t-structure along with \Cref{MfgLfiltration} and the fact that $\gamma_*|_{\tau_{\leq 0}}$ commutes with filtered colimits.

Finally, for conservativity of $\gamma^*$ on coconnective objects it suffices to show that if $M \in \tau_{\leq 0} \Modgr{\RGammagr(\Mfg)}$ is nonzero then so is $M \tensor_{\RGammagr(\Mfg)} \Laz$.
Writing
\[ U : \Modgr{\RGammagr(\Mfg)} \to \calD(\BGm) \]
for the forgetful functor, we may assume without loss of generality that $\pi_0 U(M)$ is nonzero.
It will thus suffice to see that the map
\[ \pi_0 U(M) \to \pi_0 U \prn*{M \tensor_{\RGammagr(\Mfg)} \Laz} \]
is injective.
By compatibility with filtered colimits and induction on $i$, it is enough to show that for each $i$ the map
\[ \pi_0 U \prn*{M \tensor_{\RGammagr(\Mfg)} \Laz_i} \to \pi_0 U \prn*{M \tensor_{\RGammagr(\Mfg)} \Laz_{i+1}} \]
is injective, which is immediate from the fact that $M \tensor_{\RGammagr(\Mfg)} \epsilon_* \overline{\Laz}_i$ is coconnective.
\end{proof}

\begin{remark}\label{jacobaffineness}
One may give a somewhat simpler proof of the above proposition (with essentially the same underlying idea) as a consequence of the unipotence of the group scheme parametrizing automorphisms of the $1$-dimensional formal disk which are trivial up to first order.%
\footnote{The author learned this from Jacob Lurie.}
\end{remark}

\begin{corollary}\label{Mfgclassicalheart}
If $M \in \Modgr{\RGammagr(\Mfg)}$ has underlying graded complex concentrated in degree $0$, then $M$ lies in the heart $(\Modgr{\RGammagr(\Mfg)})^\heart$.
\end{corollary}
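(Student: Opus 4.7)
The plan is to deduce both coconnectivity and connectivity of $M$ in $\Modgr{R}$, where $R := \RGammagr(\Mfg)$.  Coconnectivity is immediate from the definition of the t-structure on $\Modgr{R}$, which detects coconnectivity on underlying complexes.  For connectivity, I will use that the left adjoint $\gamma^*$ sends the generators $R(n)$ of $\tau_{\geq 0} \Modgr{R}$ to the line bundles $\calO_{\Mfg}(n)$, which lie in $\calD(\Mfg)^\heart$; therefore $\gamma^*$ preserves connectivity, and combined with \Cref{MfgQCohaffine}(1) this lets the equivalence of coconnective subcategories restrict to an equivalence of hearts.  It thus suffices to show that $\gamma^* M$ lies in $\calD(\Mfg)^\heart$.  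By \Cref{DMfgprops}(1), this may be detected after pullback along $\Spec \Laz \to \Mfg$, where it becomes the ungraded $\Laz$-module $M \tensor_R \Laz$; the task therefore reduces to proving $M \tensor_R \Laz \in \calD(\Laz)^\heart$.

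I will verify this using the filtration $\underline{\Laz}_i \in \calD(\Mfg)$ from \Cref{MfgLfiltration}.  Pushing forward along the exact, filtered-colimit-preserving functor $\gamma_*$ yields a filtration $\Laz_i := \gamma_* \underline{\Laz}_i$ of $\Laz$ in $\Modgr{R}$ with $\Laz_1 = R$ and with successive quotients $\gamma_* \overline{\Laz}_i$ that are filtered colimits of iterated extensions of the twists $R(n) = \gamma_* \calO_{\Mfg}(n)$.  Since tensoring with $M$ over $R$ commutes with cofiber sequences and filtered colimits, we obtain an analogous filtration of $M \tensor_R \Laz$ starting from $M$ whose successive quotients are filtered colimits of iterated extensions of the weight-shifts $M(n)$.

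By hypothesis, each $M(n)$ has underlying complex concentrated in degree $0$.  Iterated extensions of objects in $\calD(\BGm)^\heart$ remain in the heart by the long exact sequence of homotopy groups, and filtered colimits preserve the heart by compatibility of the t-structure with filtered colimits.  An induction on $i$ thus shows that each $M \tensor_R \Laz_i$ has underlying complex in $\calD(\BGm)^\heart$, and passing to the filtered colimit gives the same conclusion for $M \tensor_R \Laz$.  Forgetting the grading, $M \tensor_R \Laz$ lies in $\calD(\Laz)^\heart$, completing the argument.  The main care required is in the bookkeeping through the filtration---in particular, checking that $\gamma_*$ correctly transports the structure of \Cref{MfgLfiltration} to $\Modgr{R}$---but this is essentially formal given how well-behaved $\Mfg$ is.
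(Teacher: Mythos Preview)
Your proof is correct and takes essentially the same route as the paper: both pass through the equivalence on coconnectives from \Cref{MfgQCohaffine}(1), reduce to $\Laz$ via \Cref{DMfgprops}(1), and then invoke the filtration of \Cref{MfgLfiltration} to control $M \tensor_{\RGammagr(\Mfg)} \Laz$ (the paper unpacks connectivity as vanishing of $\Hom(M,N)$ and handles the $N$ side via flatness, while you show $M \tensor_{\RGammagr(\Mfg)} \Laz$ lands in degree~$0$ outright). One small point to tighten: right t-exactness of $\gamma^*$ only shows that $\gamma^*$ carries the heart into $\calD(\Mfg)^\heart$, whereas the implication you actually use is the converse --- that $\gamma^* M$ lying in the heart forces $M$ to --- which holds because $\gamma^*$ exhibits $\calD(\Mfg)$ as the left completion of $\Modgr{\RGammagr(\Mfg)}$ (part of \Cref{MfgQCohaffine}(1)), or equivalently by the $\Hom$ argument the paper spells out directly.
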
 \begin{proof}
It is clear that $M$ is coconnective, so we just need to check that it is connective.
Explicitly, we need to see that for any $(-1)$-coconnective $N$, the group $\Hom_{\Modgr{\RGammagr(\Mfg)}}(M, N)$ is trivial.
By \Cref{MfgQCohaffine} we may equivalently show triviality of $\Hom_{\Mfg}(\gamma^* M, \gamma^* N)$.
It thus suffices by \Cref{DMfgprops} to show connectivity of $M \tensor_{\RGammagr(\Mfg)} \Laz$ and $(-1)$-coconnectivity of $N \tensor_{\RGammagr(\Mfg)} \Laz$ as $\Laz$-modules.
The former follows from \Cref{MfgLfiltration} as in the proof of \Cref{MfgQCohaffine}, while the latter is an immediate consequence of \Cref{MfgQCohaffine}(2).
\end{proof}

\begin{proposition}\label{Mfgaffine}
The graded stack $\Mfg$ is affine.
\end{proposition}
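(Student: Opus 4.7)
The plan is to use fpqc descent from the cover $\SpecBGm \Laz \to \Mfg$. Recall from \Cref{MfgMUL} that $\SpecBGm \Laz^{(\bullet)} \to \Mfg$ is a Čech colimit diagram of fpqc stacks. Applying the right adjoint $\SpecBGm$ to the resulting limit identification
\[ \RGammagr(\Mfg) \equiv \lim_{\bullet} \Laz^{(\bullet)} \]
in graded derived rings, I would show that the same diagram presents $\SpecBGm \RGammagr(\Mfg)$ as a colimit of stacks, which immediately yields the result.

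First, I would verify that the map $\pi \colon \SpecBGm \Laz \to \SpecBGm \RGammagr(\Mfg)$ induced by the unit $\RGammagr(\Mfg) \to \Laz$ is faithfully flat: flatness is exactly \Cref{MfgQCohaffine}(2), and faithfulness is the conservativity of $(-) \tensor_{\RGammagr(\Mfg)} \Laz$ on coconnective objects established at the end of the proof of \Cref{MfgQCohaffine}. In particular, $\pi$ is an effective epimorphism of fpqc stacks. Next, I would identify its Čech nerve: one needs, for each $n \geq 1$, an equivalence
\[ \Laz^{(n)} \equiv \underbrace{\Laz \tensor_{\RGammagr(\Mfg)} \cdots \tensor_{\RGammagr(\Mfg)} \Laz}_{n \text{ factors}} \]
of graded derived rings. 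Applying $\gamma^*$ to the right-hand side and using its symmetric monoidality, one obtains the $n$-fold tensor product of $\underline{\Laz}$ over $\calO_{\Mfg}$, which by flat base change along $\SpecBGm \Laz \to \Mfg$ agrees with $\underline{\Laz^{(n)}}$. Since all objects involved are discrete, \Cref{MfgQCohaffine}(1) identifies this with $\Laz^{(n)}$ at the level of graded modules. Given these two inputs, fpqc descent applied on both sides yields
\[ \SpecBGm \RGammagr(\Mfg) \equiv \colim_{\bullet} \SpecBGm \Laz^{(\bullet)} \equiv \Mfg. \]

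The main obstacle, in my view, is passing from the module-level equivalence of \Cref{MfgQCohaffine}(1) to a derived-ring-level equivalence in the Čech nerve identification above. One must verify not just that the underlying graded modules of $\Laz^{(n)}$ and the iterated tensor product agree, but that the relevant derived ring structures in the sense of Mathew match. This should follow from the fact that $\gamma^*$ is a symmetric monoidal left adjoint between presentable symmetric monoidal categories, so it lifts to commutative algebras and hence to derived rings, together with \Cref{Mfgclassicalheart} to keep everything classical where the derived ring structure is determined by the underlying discrete ring. The bookkeeping needed to track this extra structure through the equivalence is the technical heart of the argument.
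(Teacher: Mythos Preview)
Your approach is essentially the paper's: both present $\Mfg$ and $\SpecBGm \RGammagr(\Mfg)$ as geometric realizations of the same Čech nerve $\SpecBGm \Laz^{(\bullet)}$, reducing to showing that $\SpecBGm \Laz \to \SpecBGm \RGammagr(\Mfg)$ is an fpqc cover with that Čech nerve.

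Two points where your emphasis diverges from the paper's. First, your justification for effective epimorphism---flatness plus conservativity of $(-)\tensor_{\RGammagr(\Mfg)}\Laz$ on coconnectives---is not quite sufficient on its own: fpqc surjectivity is tested on maps from classical affines, so one must check that for a classical graded ring $R$ receiving a map from $\RGammagr(\Mfg)$, the base change $R \tensor_{\RGammagr(\Mfg)} \Laz$ is again classical and faithfully flat over $R$. This is where the paper actually uses \Cref{Mfgclassicalheart} (to place $R[t^{\pm 1}]$ in the heart), together with flatness of $\Laz$ and the identification of hearts in \Cref{MfgQCohaffine}(1), to reduce to faithful flatness of $\SpecBGm \Laz \to \Mfg$. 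Second, the derived-ring-structure issue you flag as the main obstacle is lighter than you suggest: there is already a canonical map of graded derived rings
\[ \Laz \tensor_{\RGammagr(\Mfg)} \cdots \tensor_{\RGammagr(\Mfg)} \Laz \to \Laz^{(n)} \]
from the universal property of the pushout, and since pushouts of derived rings are computed as tensor products on underlying modules, it suffices to check this map is an equivalence of graded modules---which is exactly what your $\gamma^*$ argument (or the paper's ``Čech conerve in the heart'' argument) provides. No further structure-tracking is needed.
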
 \begin{proof}
We have a morphism of augmented simplicial objects
\[\xymatrix{
\SpecBGm \Laz^{(\bullet)} \ar[d] \ar@{=}[r] & \SpecBGm \Laz^{(\bullet)} \ar[d] \\
\Mfg \ar[r] & \SpecBGm \RGammagr(\Mfg) \mpunc,
}\]
which we wish to show is an equivalence.
Here the left vertical arrow is a Čech nerve.
From this and \Cref{MfgQCohaffine} we see that the diagram
\[ \RGammagr(\Mfg) \to \Laz^{(\bullet)} \]
is a Čech conerve in $(\Modgr{\RGammagr(\Mfg)})^\heart$, hence by flatness of $\Laz$ a Čech conerve in $\Modgr{\RGammagr(\Mfg)}$.
Thus the augmented simplicial objects above are both Čech nerves.
It will therefore suffice to see that the map
\[ \SpecBGm \Laz \to \SpecBGm \RGammagr(\Mfg) \]
is an fpqc surjection.
For this it is enough to show that, for any ordinary ring $R$ with a map
\[ \Spec R \to \SpecBGm \RGammagr(\Mfg) \mpunc, \]
the pullback
\[ \Spec R \times_{\SpecBGm \RGammagr(\Mfg)} \SpecBGm \Laz \to \Spec R \]
is faithfully flat.
Localizing, we may assume that the induced map $\Spec R \to \BGm$ classifies the trivial line bundle; unpacking everything, we need to see that, given a map of graded derived rings
\[ \RGammagr(\Mfg) \to R[t^{\pm 1}] \]
with $R$ classical and $\abs{t} = 1$, the induced map
\[ R \to \gr^0\prn*{R[t^{\pm 1}] \tensor_{\RGammagr(\Mfg)} \Laz} \]
is a faithfully flat map of classical rings.
\Cref{Mfgclassicalheart} shows that $R[t^{\pm 1}]$ is in the heart of $\Modgr{\RGammagr(\Mfg)}$, and by flatness of $\Laz$ so is the tensor product.
The desired statement therefore follows from \Cref{MfgQCohaffine} and faithful flatness of
\[ \SpecBGm \Laz \to \Mfg \mpunc. \qedhere \]
\end{proof}

\begin{remark}
One can also give a proof of \Cref{Mfgaffine} using \Cref{MfgQCohaffine} and the $1$-affineness results of \cite{stefanichtannaka}.
\end{remark}

\begin{remark}\label{affdiagaffineness}
In \Cref{Mfgaffine}, the specific choice of derived rings as our version of coconnective commutative rings was inessential, and the proof goes through just as well if we instead use, e.g., $\Einfty$-rings.
In fact, a careful analysis of the proof shows that for any relative geometric stack with affine diagonal, relative affineness is equivalent for any reasonable notion of coconnective commutative ring.
This is related to the fact that Tannaka duality holds for such stacks, so they can be recovered from their symmetric monoidal derived categories, without any extra structure pertaining to enhancements of the $\Einfty$ structure.
\end{remark}

\begin{remark}
As a consequence of \Cref{Mfgaffine}, if $X$ is a graded stack and $\hatG$ a graded formal group on $X$, we obtain a natural descent of $\hatG$ to $\SpecBGm \RGammagr(X)$, which we will denote by $\hatG^\aff$; this construction provides an equivalence between the spaces of graded formal groups on $X$ and on $\SpecBGm \RGammagr(X)$, whose inverse is given by pullback.
Explicitly, we can write $\hatG^\aff$ as
\[ \colim_n \RGammagr(\hatG_n) \mpunc, \]
where $\hatG_n$ is the $n$\textsuperscript{th}-order neighborhood of the identity in $\hatG$.
\end{remark}

The graded-affineness of $\Mfg$ allows us to construct ``pseudo-algebraizations'' of certain formal groups, as follows.

\begin{lemma}\label{bddfgtensorcomplete}
Let $X$ be a graded stack such that $\RGammagr(X)$ is concentrated in finitely many weights, and let $\hatG$ be a graded formal group on $X$.
Then for each $n \geq 0$, the natural map
\[ \RGammagr(\hatG)^{\tensor_{\RGammagr(X)} n} \to \RGammagr(\hatG^n) \]
is an equivalence.
\end{lemma}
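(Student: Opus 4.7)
The plan is to reduce the statement to a concrete comparison of limits and tensor products of graded modules with bounded weights. First, using the remark following \Cref{Mfgaffine}, I would replace $\hatG$ by its affinization $\hatG^\aff$, reducing to the case $X = \SpecBGm R$ with $R = \RGammagr(X)$ concentrated in a finite range of weights, say $[a, b]$.

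Write $\hatG_m$ for the $m$-th order infinitesimal neighborhood of the identity. Each $\hatG_m$ is a relative affine scheme over $X$, finite flat of rank $m + 1$, with coordinate algebra $A_m \defeq \RGammagr(\hatG_m)$ carrying a natural order filtration whose associated graded pieces are $\omega^{\tensor k}$ for $k = 0, 1, \dots, m$, where $\omega$ is the cotangent line bundle. By assumption $\omega$ is a power of the tautological line bundle on $\BGm$, so each $\omega^{\tensor k}$ is a single-weight grading shift of $R$. Since $\hatG = \colim_m \hatG_m$ and $\RGammagr$ takes colimits of stacks to limits of graded derived rings, we have $\RGammagr(\hatG) = \lim_m A_m$. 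The inclusions $(\hatG^n)_m \hookrightarrow (\hatG_m)^n \hookrightarrow (\hatG^n)_{nm}$ show that $\{(\hatG_m)^n\}_m$ and $\{(\hatG^n)_m\}_m$ are cofinal pro-systems, so $\hatG^n = \colim_m (\hatG_m)^n$, and by flatness $\RGammagr((\hatG_m)^n) = A_m^{\tensor_R n}$. The map of the lemma thus identifies with
\[ (\lim_m A_m)^{\tensor_R n} \to \lim_m A_m^{\tensor_R n}. \]

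The main technical step is to verify that in each fixed weight $w$, both sides agree with a common stabilized value $(A_M^{\tensor_R n})_w$ for $M \gg 0$. The induced filtration on $A_m^{\tensor_R n}$ has graded pieces which are direct sums of $\omega^{\tensor K}$ for various $K \leq nm$, each concentrated in a single weight; the $K$'s contributing to a fixed weight $w$ form a bounded range depending on $w$ and $[a, b]$ but not on $m$, so $(A_m^{\tensor_R n})_w$ stabilizes for $m$ sufficiently large. For the left-hand side, the weight-$w$ part of the derived tensor product is computed by a bar construction whose inputs involve $(\lim_m A_m)_{w_i}$ over tuples $(w_1, \dots, w_n)$ summing to $w$ with each $w_i \leq b$; only finitely many such tuples contribute (each $w_i$ is pinned into the range $[w - (n-1) b,\, b]$). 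For each such tuple, $(\lim_m A_m)_{w_i}$ agrees with $(A_M)_{w_i}$ for $M$ large, and taking $M$ uniform over this finite set of tuples identifies the weight-$w$ left-hand side with the corresponding weight of $A_M^{\tensor_R n}$, matching the right-hand side.

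The main obstacle is that the interchange of limits with tensor products fails in general (e.g.\ in the ungraded setting the completed power series ring does not satisfy $R\Brk{t} \tensor_R R\Brk{t} \equiv R\Brk{t_1, t_2}$); the bounded-weight hypothesis on $R$ is precisely what forces every graded component to reduce to a finite combinatorial calculation, enabling the interchange here.
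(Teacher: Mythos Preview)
Your overall approach is the paper's: identify the map with a completion map and use the bounded-weight hypothesis to show that in each fixed weight only finitely many ``levels'' contribute. The reduction to the affine case and the stabilization argument for the right-hand side are fine.

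The gap is in your treatment of the left-hand side. You assert that the weight-$w$ part of $B^{\otimes_R n}$ (with $B = \lim_m A_m$) is built from $B_{w_i}$ over tuples $(w_1,\dots,w_n)$ summing to $w$ with each $w_i \leq b$, hence involves only finitely many weight components of $B$. That decomposition is correct for $\otimes_{\bbZ}$, not for $\otimes_R$. In the bar resolution computing $\otimes_R$, the level-$k$ term is (schematically) $B^{\otimes_{\bbZ} n} \otimes_{\bbZ} R^{\otimes_{\bbZ}(\text{const}\cdot k)}$, and the weight constraint on a given $B$-factor becomes $w_i \geq w - (n-1+k)b$ once one accounts for the extra $R$-factors. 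If $R$ has any component in positive weight (which you cannot exclude), this lower bound is not uniform in $k$, so arbitrarily low weights of $B$ appear at high simplicial levels. Thus your ``finite set of tuples, choose $M$ uniformly'' step does not go through as written.

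The fix is precisely the filtration argument you already used for the right-hand side, applied instead to the source. This is how the paper proceeds: give $B^{\otimes_R n}$ the tensor product of the adic filtrations; its $k$th associated graded is a finite direct sum of copies of $R(-k)$, and since $R$ lives in finitely many weights, in each fixed weight $w$ only finitely many $k$ contribute. Hence the induced filtration on each weight component is finite, so the completion map $B^{\otimes_R n} \to \RGammagr(\hatG^n)$ is an equivalence. No separate bar-construction analysis of the left-hand side is needed.
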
 \begin{proof}
Writing $\Guniv$ for the universal formal group, we may regard $\RGammagr(\Guniv)$ as an abelian cogroup in complete filtered graded $\Einfty$-$\RGammagr(\Mfg)$-algebras,%
\footnote{We switch to the $\Einfty$ setting here in order to avoid having to introduce filtered graded derived rings, and because it makes no difference for the result.}
where the filtration is the decreasing, $\bbN$-indexed adic filtration.
By base change from the universal case, $\RGammagr(\hatG)$ is such an object over $\RGammagr(X)$, and the map in question is given by completion with respect to the filtration.
It will thus suffice to show that the filtration on the source is already complete.

Note that the $k$\textsuperscript{th} graded piece of the filtration on $\RGammagr(\hatG)$ is given by the shift $\RGammagr(X)(-k)$, so the $k$\textsuperscript{th} graded piece of the filtration on $\RGammagr(\hatG)^{\tensor_{\RGammagr(X)} n}$ is given by the direct sum
\[ \bigoplus_{\substack{i_1 + \dots + i_n = k \\ i_1, \dots, i_n \geq 0}} \RGammagr(X)(-k) \mpunc. \]
Thus, as $\RGammagr(X)$ lives in bounded weights, the induced filtration on each piece of the internal grading is finite, so the filtration is complete.
\end{proof}

\begin{construction}\label{bddfgalmostalg}
In the situation of \Cref{bddfgtensorcomplete}, the abelian group structure on $\hatG$ gives $\RGammagr(\hatG)$ the structure of an abelian cogroup in $\DAlg_{\RGammagr(X)}$.
We thus obtain a natural abelian group stack $G^\aff \defeq \SpecBGm \RGammagr(\hatG)$ over $\SpecBGm \RGammagr(X)$, along with a natural map
\[ \hatG^\aff \to G^\aff \mpunc. \]
\end{construction}

It is not clear whether this construction is well-behaved in complete generality, though we will see that it is in our primary case of interest.

\begin{question}
In the situation of \Cref{bddfgalmostalg}, does the group stack $G^\aff$ have underlying pointed stack locally isomorphic to $(\bbA^1, 0)$?
Does the map
\[ \hatG^\aff \to G^\aff \]
exhibit the source as the completion of the target at the identity section?
\end{question}

Note that the answer is yes, for instance, whenever there is an ordinary graded ring $R$ concentrated in finitely many weights such that $\SpecBGm \RGammagr(X)$ admits a flat cover by $\SpecBGm R$.

\section{Identification of the Drinfeld and Quillen formal groups}\label{sec:drinfeldquillenfg}

In this section, we will prove the first of our main theorems.
We begin by introducing the objects of interest.

\begin{construction}[The Quillen formal group on $\Zp^\Syn$]
For a QRSP ring $R$ the object
\[ \bigoplus_i \pi_{2i} \prn*{\TCm(R; \Zp)^\CPinfty} \{-i\} t^{-i} \]
defines a Hopf algebra in $\Pro(\gaugecpl(R))$ as in \Cref{gaugecplRees}, with pro- structure coming from the CW filtration on $\CPinfty$.
The composite
\[ \pi_0 \prn*{\TPtw(R; \Zp)^\CPinfty} \to \pi_0 \prn*{\TP(R; \Zp)^\CPinfty} \jib \colim_i \pi_{2i} \prn*{\TCm(R; \Zp)^\CPinfty}\{-i\} \]
lifts this Hopf algebra to $\Pro(\gaugedcpl(R))$.
We also have a natural Frobenius-linear map
\[ \pi_{2*} \prn*{\TCm(R; \Zp)^\CPinfty} \to \pi_{2*} \prn*{\TPtw(R; \Zp)^\CPinfty} \mpunc, \]
which yields (after Breuil--Kisin twisting) a filtered Frobenius-linear map from our pro-completed-gauge to
\[ I^\bullet \pi_0 \prn*{\TPtw(R; \Zp)^\CPinfty} \mpunc. \]
It is immediate from the construction of $\TPtw$ that this map is an isomorphism after linearization, so we obtain a lift of our Hopf algebra to $\Pro(\Fgaugecpl(R))$.

As each term of this pro-system is an iterated extension of Breuil--Kisin twists of the unit, we obtain from \Cref{fgaugedcplff} a Hopf algebra in $\Pro(\Fgaugevect(R))$, where $\Fgaugevect(R)$ is the category of vector bundles on the stack $R^\Syn$.
Now by naturality this descends to a Hopf algebra in $\Pro(\Fgaugevect(\Zp))$, defining a group stack $\GQ$ over $\Zp^\Syn$, which we call the \emph{Quillen formal group}.%
\footnote{Note that $\GQ$ is indeed a formal group: we may by descent check this on an fpqc cover, so it suffices to note that the pullback of $\GQ$ to $\OCp^\Nyg$ is the Quillen formal group of $\TCm(\OCp; \Zp)$ (in the sense of \Cref{quillenfggeneral}).}
\end{construction}

\begin{remark}
Note that if $E$ is a complex-orientable ring spectrum then the cotangent space of its Quillen formal group is the tautological line bundle $\calO(1)$ over $\SpecBGm \pi_{2*} E$.
Similarly, it follows from the construction above that the cotangent space of $\GQ$ is the first Breuil--Kisin twist $\calO_{\Zp^\Syn}\{1\}$.
\end{remark}

\begin{remark}
As $\TC(-; \Zp)$ is quasisyntomic-locally even \cite[Theorem 14.1]{prisms}, one can also define $\GQ$ by using the even homotopy sheaves of $\TC(-; \Zp)^\CPinfty$ to locally define a prismatic $F$-gauge without passing through the Nygaard-completion.
\end{remark}

\begin{recollection}[The Drinfeld formal group {\cite{drinfeldfg}}]
Drinfeld constructs a certain formal group $\GDr$ over $\Zp^\Prism$ equipped with an ``inverse Frobenius'' morphism
\[ \phi_{\Zp^\Prism}^* \GDr \to \GDr \mpunc. \]
After pullback to the $q$-de Rham prism, this formal group is given by $\Spf \Zp\Brk{q-1, z}$, with formal group law
\[ z \mapsto z_1 + z_2 + (q-1) z_1 z_2 \mpunc. \]
This pullback obtains a natural $\Zptimes$-action from that on the $q$-de Rham prism, explicitly determined by the formula
\[ \Zptimes \ni a : (q-1) z \mapsto ((q-1) z + 1)^a - 1 \mpunc, \]
and the inverse Frobenius map is given by sending
\[ z \mapsto [p]_q z \mpunc. \]
In particular, the inverse Frobenius map is an isomorphism after $p$-completely inverting $[p]_q$, so the coordinate ring of $\GDr$ obtains the structure of a Hopf algebra in the category
\[ \Pro(\Fcrysvect((\Zp)_\Prism, \calO_\Prism)) \]
of pro-objects in prismatic $F$-crystals of vector bundles on $(\Zp)_\Prism$ (as defined in \cite[Definition 4.1]{fcrystalsGaloisreps}).
\end{recollection}

We now introduce some categories which will allow us to reduce the study of formal groups over $\Zp^\Syn$ to those over $W(\Cpflat)$.

\begin{recollection}
The category $\Fcrysvect((\Zp)_\Prism, \calO_\Prism)$ admits a forgetful functor
\[ \Fcrysvect((\Zp)_\Prism, \calO_\Prism) \to \Fcrysvect((\Zp)_\Prism, \calO_\Prism[\tfrac{1}{\calI}]\pcpl) \]
to the category of \emph{Laurent $F$-crystals} of vector bundles on $(\Zp)_\Prism$, as defined in \cite[Definition 3.2]{fcrystalsGaloisreps}.
Base changing to $\OCp$, we obtain a functor
\[ \Fcrysvect((\Zp)_\Prism, \calO_\Prism[\tfrac{1}{\calI}]\pcpl) \to (\phi, G_{\Qp})\mathhyphen\mathrm{mod}^\vect(W(\Cpflat)) \]
to the category of Galois-equivariant Laurent $F$-crystals of vector bundles on $(\OCp)_\Prism$, which we identify with the category of $(\phi, G_{\Qp})$-equivariant vector bundles over $W(\Cpflat)$.
\end{recollection}

\begin{proposition}\label{FgaugephiGammamod}
Each of the functors
\begin{align*}
\Fgauge^\vect(\Zp)
  &\to \Fcrysvect((\Zp)_\Prism, \calO_\Prism)
\\&\to \Fcrysvect((\Zp)_\Prism, \calO_\Prism[\tfrac{1}{\calI}]\pcpl)
\\&\to (\phi, G_{\Qp})\mathhyphen\mathrm{mod}^\vect(W(\Cpflat))
\end{align*}
is fully faithful, and the last is an equivalence.
\end{proposition}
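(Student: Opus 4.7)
The plan is to address the three functors in sequence, relying on established results about prismatic $F$-crystals for the second and third, and providing a direct analysis of the Nygaard filtration for the first.

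For the third functor, the strategy is to pass through the equivalence, due to Bhatt--Scholze (\cite{fcrystalsGaloisreps}), between Laurent $F$-crystals on $(\Zp)_\Prism$ and continuous $\Zp$-representations of $G_{\Qp}$ on finite projective $\Zp$-modules. Classical $p$-adic Hodge theory then identifies such representations with $(\phi, G_{\Qp})$-equivariant vector bundles over $W(\Cpflat)$ via the functor $V \mapsto V \otimes_{\Zp} W(\Cpflat)$ (with diagonal Galois action and $\phi$ acting only on the coefficients), whose inverse takes $\phi$-invariants. Composing these two equivalences yields the desired equivalence. Alternatively, one can proceed more concretely by restricting a Laurent $F$-crystal to $(\OCp)_\Prism$, which has final object the perfect prism $(W(\Cpflat), (\xi))$, and then performing $G_{\Qp}$-Galois descent from $\OCp$ back to $\Zp$.

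For the second functor, fully faithfulness is a direct consequence of another theorem of Bhatt--Scholze in \cite{fcrystalsGaloisreps}: the generic-fiber functor from integral prismatic $F$-crystals of vector bundles on $(\Zp)_\Prism$ into Laurent $F$-crystals is fully faithful, with essential image corresponding to the crystalline $G_{\Qp}$-representations under the identification above.

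For the first functor, the plan is to show that the Nygaard filtration on a prismatic $F$-gauge of vector bundles is intrinsically determined by the underlying $F$-crystal. Specifically, upon unpacking the defining filtered isomorphism $\phi_E$ in the vector bundle case, I expect that $\Fil^i E = \{ e \in E : \phi(e) \in I^i E \}$, where $\phi : E \to E[\tfrac{1}{I}]$ is the Frobenius of the underlying $F$-crystal. Granting this intrinsic description, any morphism of $F$-crystals between two vector bundle $F$-gauges automatically preserves the Nygaard filtration, yielding fully faithfulness. The main obstacle is rigorously establishing this formula: it requires a careful analysis of the filtered Frobenius isomorphism in the vector bundle setting, and crucially relies on the vector bundle hypothesis to guarantee that the intersection condition interacts well with the prismatic structure.
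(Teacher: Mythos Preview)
Your proposal is correct, and in content it matches the paper's proof, which is entirely by citation: the first functor is fully faithful by \cite[Theorem 6.6.13]{fgauges} or \cite[Corollary 2.53]{frobheight}, the second by \cite{fcrystalsGaloisreps}, and the third is an equivalence by the proof of \cite[Theorem 5.6]{wufcrys}.

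For the second functor your argument coincides with the paper's. For the third, your two-step route (Laurent $F$-crystals $\simeq$ continuous $\Zp$-representations of $G_{\Qp}$ via Bhatt--Scholze, then representations $\simeq$ $(\phi, G_{\Qp})$-equivariant vector bundles over $W(\Cpflat)$ via the classical Artin--Schreier--Witt/\'etale $\phi$-module equivalence) is exactly the content of Wu's argument, so there is no real divergence. The only genuine difference is for the first functor: rather than citing, you sketch the underlying mechanism, namely that for vector-bundle $F$-gauges the Nygaard filtration is recovered from the $F$-crystal by $\Fil^i E = \phi^{-1}(I^i E)$. This is precisely the characterization proved in \cite[Corollary 2.53]{frobheight} (and is closely related to the reflexive-gauge description in \cite[Theorem 6.6.13]{fgauges}), so you are re-deriving the cited result. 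Your approach has the virtue of being self-contained and making the mechanism visible; the paper's has the virtue of brevity and of not having to carry out the vector-bundle verification you correctly flag as the main technical point.
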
 \begin{proof}
The first functor is fully faithful by \cite[Theorem 6.6.13]{fgauges} or \cite[Corollary 2.53]{frobheight}, the second fully faithful by \cite{fcrystalsGaloisreps}, and the last an equivalence by the proof of \cite[Theorem 5.6]{wufcrys}.
\end{proof}

\begin{theorem}\label{drinfeldquillenfg}
The Drinfeld and Quillen formal groups define the same Hopf algebra in $\Pro(\Fcrysvect((\Zp)_\Prism, \calO_\Prism))$.
In particular, the two formal groups agree on $\Zp^\Prism$, and we have a natural isomorphism of formal groups
\[ \GDr_{\Prism_R} \isom \Spf \pi_0 \prn*{\TPtw(R; \Zp)^\CPinfty} \]
for any QRSP ring $R$.
\end{theorem}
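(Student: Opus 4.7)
The plan is to exploit Proposition~\ref{FgaugephiGammamod}: since the composite functor there is fully faithful, to produce an isomorphism of Hopf algebras in $\Pro(\Fcrysvect((\Zp)_\Prism, \calO_\Prism))$ it suffices to produce one in the much more concrete category $\Pro((\phi, G_{\Qp})\mathhyphen\mathrm{mod}^\vect(W(\Cpflat)))$. The ``in particular'' clauses of the theorem then follow formally by restricting along the open immersion $\Zp^\Prism \inj \Zp^\Syn$ and specializing to a QRSP ring $R$ via pullback along the structure map of its prism.

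I would start by pulling both formal groups back to the prism $(\Prism_{\OCp}, (\tilde\xi))$, which sits under $W(\Cpflat)$ via the Laurent $F$-crystal functor. On the Drinfeld side, the map $(\QdRcyc, ([p]_q)) \to (\Prism_{\OCp}, (\tilde\xi))$ sending $q - 1 \mapsto \mu$ transports the explicit description of $\GDr$ recalled above to $\Spf \Prism_{\OCp}\Brk{z}$ with formal group law $z_1 + z_2 + \mu z_1 z_2$. On the Quillen side, I would use the identification
\[ \pi_{2*} \TCm(\OCp; \Zp) \isom \FilN^* \Prismhat_{\OCp}\{*\} \]
together with Morava's computation \cite{moravathh} of the formal group law of the canonical complex orientation of $\THH(\OCp)$, transported through the Rees-type construction of Section~\ref{sec:TPtw} to the $2$-periodic ring spectrum $\TPtw(\OCp; \Zp)$. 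Trivializing the Breuil--Kisin twist $\calO\{1\}$ by $\mu$ to match the normalization on the Drinfeld side, one obtains the same multiplicative formal group law with parameter $\mu$.

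Once the two formal group laws agree after pullback to $\Prism_{\OCp}$, compatibility with the Frobenius ($z \mapsto [p]_q z$ on the Drinfeld side) and with the $G_{\Qp}$-action (given on the Drinfeld side by $a \cdot ((q-1)z) = ((q-1)z+1)^a - 1$) follows from naturality of both constructions: both actions are ultimately determined by how $G_{\Qp}$ and the Frobenius act on $\mu = [\epsilon] - 1$, which is dictated by the cyclotomic character on both sides.

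The main obstacle is the identification of the Quillen formal group law over $\Prism_{\OCp}$ as the multiplicative one with parameter $\mu$. Concretely, one must trace through the construction of $\TPtw$ in Section~\ref{sec:TPtw} to compute the image in $\pi_0(\TPtw(\OCp; \Zp)^{\CPinfty})$ of the first Chern class of the tautological bundle on $\CPinfty$ and verify that under $\pi_0 \TPtw(\OCp; \Zp) \isom \Prism_{\OCp}$ it corresponds to $\mu$, up to a unit absorbed into the chosen trivialization of $\calO\{1\}$. Once this is in place, the remaining identifications are formal consequences of naturality and the full faithfulness of the functors in Proposition~\ref{FgaugephiGammamod}.
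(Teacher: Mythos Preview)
Your overall strategy matches the paper's: reduce via Proposition~\ref{FgaugephiGammamod} to comparing $(\phi, G_{\Qp})$-equivariant formal groups over $W(\Cpflat)$, identify both as the multiplicative formal group with parameter $\mu$, and then match the extra structure. The differences are in the execution.

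On the Quillen side, the paper does not go through Morava's computation for $\THH(\OCp)$. Instead it uses the $G_{\Qp}$-equivariant identification $\TC(\OCp;\Zp) \simeq \ku\pcpl$ (Hesselholt--Nikolaus) together with the standard orientation of $\ku$, whose formal group law is $z'_1 + z'_2 + \beta z'_1 z'_2$ with $\beta$ the Bott element. The Bott element maps to $\log_\Prism \epsilon \in \pi_2 \TC(\OCp;\Zp)$, not to $\mu$; your phrase ``trivializing the Breuil--Kisin twist by $\mu$'' elides the distinction between $\log_\Prism \epsilon$ and $\mu$, which is exactly what carries the nontrivial Galois and Frobenius information. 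The paper then changes coordinates to $z = \frac{\log_\Prism \epsilon}{\mu} t^{-1} z'$ after $p$-completely inverting $\xi$ to land in weight $0$ over $W(\Cpflat)$.

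The more serious gap is your claim that compatibility with Frobenius and $G_{\Qp}$ ``follows from naturality.'' Naturality only tells you that each side carries \emph{some} canonical $(\phi, G_{\Qp})$-structure; it does not by itself show the two structures coincide under your isomorphism of underlying formal groups. The paper supplies the missing ingredient: over the torsion-free ring $W(\Cpflat)$, any automorphism of the multiplicative formal group law is of the form $x \mapsto (x+1)^a - 1$ and is therefore determined by its effect on the cotangent space. One then computes that on the Quillen side the image of $z$ in the cotangent space is $\bar z = \frac{\log_\Prism \epsilon}{\mu}$, so the Galois action on $\mu \bar z = \log_\Prism \epsilon$ is by the cyclotomic character, and (using \cite[Remark 2.6.2]{apc}) the Frobenius sends $\bar z \mapsto \tilde\xi^{-1} \bar z$, matching the Drinfeld side. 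Without this rigidity-plus-cotangent argument, your ``determined by how they act on $\mu$'' step is not justified.
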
 \begin{proof}
By \Cref{FgaugephiGammamod} it will suffice to see that they define the same $(\phi, G_{\Qp})$-equivariant formal group over $W(\Cpflat)$.
After this base change we may write the coordinate ring of the Drinfeld formal group as $W(\Cpflat)\Brk{z}$, with formal group law
\[ z \mapsto z_1 + z_2 + \mu z_1 z_2 \mpunc, \]
Frobenius $z \mapsto \tilde\xi^{-1} z$, and Galois action
\[ G_{\Qp} \ni g : \mu z \mapsto (\mu z + 1)^{\chicyc(g)} - 1 \]
(where $\chicyc$ is the cyclotomic character).
Note that the Galois action on the cotangent space $(z) / (z^2)$ is thus given by
\[ G_{\Qp} \ni g : \mu z \mapsto \chicyc(g) \mu z \mpunc. \]

To identify $\GQ$, note that we have a $G_{\Qp}$-equivariant map
\[ \TC(\OCp; \Zp) \to \TCm(\OCp; \Zp) \mpunc, \]
where the source identifies with $\ku\pcpl$ by the argument of \cite[Corollary 1.3.8]{hntc}.
Recall that $\ku$ admits a complex orientation with associated formal group law
\[ z' \mapsto z'_1 + z'_2 + \beta z'_1 z'_2 \mpunc, \]
where $\beta$ is the Bott element and $z'$ is the complex orientation, regarded as a weight-$(-1)$ element of $\pi_{2*} \prn[\big]{\ku^\CPinfty}$.
We may identify $\beta$ with $\log_\Prism \epsilon \in \pi_2 \TC(\OCp; \Zp)$ and thus describe $\GQ$ over the twisted Rees ring $\bigoplus_i \FilN^i \Prism_{\OCp} \{i\} t^{-i}$ by the graded formal group law
\[ z' \mapsto z'_1 + z'_2 + (\log_\Prism \epsilon) t^{-1} z'_1 z'_2 \mpunc. \]
After $p$-completely inverting $\xi$ we may set $z \defeq \frac{\log_\Prism \epsilon}{\mu} t^{-1} z'$ to obtain the formal group law
\[ z \mapsto z_1 + z_2 + \mu z_1 z_2 \]
over the graded ring
\[ \prn*{\bigoplus_i \FilN^i \Prism_{\OCp} \{i\} t^{-i}}\brk*{\frac{1}{\xi}}\pcpl \isom \bigoplus_i W(\Cpflat)\{i\} \mpunc. \]
Note that the element $z$ and the associated formal group law live in weight $0$, so we may regard $z$ as a coordinate for the Quillen formal group over $W(\Cpflat)$, with associated formal group law as above.

In particular, as $\mu$ is invertible in $W(\Cpflat)$, $\GQ_{W(\Cpflat)}$ is isomorphic to the multiplicative formal group.
Recall that an automorphism of the multiplicative formal group law
\[ x \mapsto x_1 + x_2 + x_1 x_2 \mpunc, \]
over a torsion-free ring $R$ is given by
\[ x \mapsto (x + 1)^a - 1 \]
for some $a \in R$ such that $\binom{a}{k} \in R$ for all $k \geq 1$ (see e.g. \cite[\S 7]{coctalos}).
In particular, such an automorphism is determined entirely by $a$.
Thus the Galois and Frobenius actions on $\GQ_{W(\Cpflat)}$ are determined by the induced actions on its cotangent space.
As $z'$ is a complex orientation, its image in the cotangent space is by definition $1$, so the image of $z$ is $\bar{z} \defeq \frac{\log_\Prism \epsilon}{\mu}$.
The Galois action on $\mu \bar{z}$ is therefore given by multiplication by the cyclotomic character, and by \cite[Remark 2.6.2]{apc} the Frobenius map sends $\bar{z} \mapsto \tilde{\xi}^{-1} \bar{z}$, as desired.
\end{proof}

\begin{corollary}\label{drinfeldfgZpSyn}
The Drinfeld formal group extends to a formal group over $\Zp^\Syn$ with cotangent space $\calO_{\Zp^\Syn}\{1\}$, and this extension is uniquely determined by its cotangent space.
More precisely, the space of lifts
\[\xymatrix{
\Zp^\Prism \ar[d]_{\GDr} \ar[r] & \Zp^\Syn \ar@{-->}[dl] \ar[d]^{\calO_{\Zp^\Syn}\{1\}} \\
\Mfg \ar[r] & \BGm
}\]
is contractible.
\end{corollary}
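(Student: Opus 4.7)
The plan is to prove the Corollary in two steps: existence of the claimed extension via the Quillen formal group $\GQ$ constructed earlier in this section, and contractibility via the fully faithful embedding of Proposition \ref{FgaugephiGammamod}.

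For existence, the Quillen formal group $\GQ$ is by construction a formal group over $\Zp^\Syn$ with cotangent space $\calO_{\Zp^\Syn}\{1\}$ (as noted in the remark immediately following its construction), and by Theorem \ref{drinfeldquillenfg} its restriction to $\Zp^\Prism$ is $\GDr$. Hence the space of lifts is nonempty.

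For contractibility, I would reformulate the datum of a formal group on $\Zp^\Syn$ with cotangent space $\calO_{\Zp^\Syn}\{1\}$ as a commutative Hopf algebra structure in $\Pro(\Fgaugevect(\Zp))$ on a pro-object whose underlying pro-filtered vector bundle is pinned down by the cotangent space (explicitly, the pro-system whose $n$th term has associated graded $\calO \oplus \calO\{1\} \oplus \cdots \oplus \calO\{n\}$). By Proposition \ref{FgaugephiGammamod}, the pullback functor from $\Fgaugevect(\Zp)$ to $(\phi, G_{\Qp})\mathhyphen\mathrm{mod}^\vect(W(\Cpflat))$ is fully faithful; this full faithfulness descends to pro-objects and to the corresponding categories of Hopf algebras, and hence to the spaces of formal groups with prescribed cotangent space. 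Since the map $W(\Cpflat) \to \Zp^\Syn$ factors through $\Zp^\Prism$, the pullback to $W(\Cpflat)$ of any lift of $\GDr$ canonically coincides with the pullback of $\GDr$ itself. Consequently, the induced map from the space of lifts to the (contractible) space of $(\phi, G_{\Qp})$-equivariant formal groups over $W(\Cpflat)$ equipped with a prescribed identification with $\GDr|_{W(\Cpflat)}$ is a full monomorphism of spaces; together with non-emptiness, this yields contractibility.

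The main obstacle is verifying carefully that the reformulation of formal groups as Hopf algebras on a fixed pro-object transports the full faithfulness of Proposition \ref{FgaugephiGammamod} into the desired rigidity statement. One must check in particular that the underlying pro-filtered vector bundle of such a formal group is rigid once the cotangent space is prescribed, and that the fully faithful embedding is compatible with the Hopf algebra structures (which essentially reduces to compatibility with tensor products of pro-objects).
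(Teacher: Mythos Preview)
Your existence argument is correct and matches the paper.

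The uniqueness argument has a genuine gap. You claim that ``since the map $W(\Cpflat) \to \Zp^\Syn$ factors through $\Zp^\Prism$, the pullback to $W(\Cpflat)$ of any lift of $\GDr$ canonically coincides with the pullback of $\GDr$ itself.'' But the functor of Proposition~\ref{FgaugephiGammamod} lands in $(\phi, G_{\Qp})$-modules, and the $\phi$-structure does \emph{not} come from pulling back along a map of stacks $\Spf W(\Cpflat) \to \Zp^\Syn$; it is extracted from the $F$-gauge structure itself (note that the first functor in the chain lands in prismatic $F$-crystals, not merely in crystals on $\Zp^\Prism$). Two lifts of $\GDr$ to $\Zp^\Syn$ agree by hypothesis after restriction along the open immersion $\Zp^\Prism \hookrightarrow \Zp^\Syn$, so their images in $(\phi, G_{\Qp})$-modules have the same underlying $G_{\Qp}$-equivariant formal group over $W(\Cpflat)$; but their Frobenius structures could a priori differ. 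Your target space is therefore not obviously contractible: it is the space of Frobenius structures on the fixed $G_{\Qp}$-equivariant formal group $\GDr_{W(\Cpflat)}$ inducing the prescribed Frobenius on the cotangent space $W(\Cpflat)\{1\}$.

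The paper fills exactly this gap. Since $\mu$ is a unit in $W(\Cpflat)$, the formal group $\GDr_{W(\Cpflat)}$ is isomorphic to $\Gmhat$, and over a torsion-free ring any automorphism of the multiplicative formal group law is determined by its derivative at the identity. Hence a Frobenius structure on $\GDr_{W(\Cpflat)}$ is pinned down by its action on the cotangent space, which is fixed by the requirement that the cotangent space be $\calO_{\Zp^\Syn}\{1\}$. This rigidity of $\Gmhat$ is the key input you are missing; the ``main obstacle'' you flag at the end (rigidity of the underlying pro-object) is a separate and comparatively minor point.
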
 \begin{proof}
We have just seen that such an extension exists, so it remains only to show uniqueness.
Let $\GDr_{W(\Cpflat)}$ denote the $G_{\Qp}$-equivariant formal group over $W(\Cpflat)$ obtained from $\GDr$ by base change.
We have a canonical identification of the cotangent space of $\GDr_{W(\Cpflat)}$, considered as a $G_{\Qp}$-equivariant $W(\Cpflat)$-module, with $W(\Cpflat)\{1\}$.
By \Cref{FgaugephiGammamod} it will suffice to see that the $G_{\Qp}$-equivariant formal group $\GDr_{W(\Cpflat)}$ admits at most one Frobenius structure such that the induced structure on its cotangent space is equal to the usual Frobenius structure on $W(\Cpflat)\{1\}$.
But this follows as above from the classification of automorphisms of the multiplicative formal group.
\end{proof}

\section{Algebraization of the Drinfeld formal group}\label{sec:algebraization}

We now direct our efforts towards the second of our main theorems.
In light of \Cref{drinfeldquillenfg} and \Cref{drinfeldfgZpSyn}, we ignore the distinction between the Drinfeld and Quillen formal groups, and in particular regard $\GDr$ as a formal group over $\Zp^\Syn$ rather than only over $\Zp^\Prism$.

Our strategy here will be to use the $(p, v_1)$-completeness of $\Zp^\Syn$ along with a Quillen--Lichtenbaum-type statement to reduce to a situation where we can apply \Cref{bddfgalmostalg}.
It is then a simple matter to check that the pseudo-algebraization we obtain is actually the desired algebraization.

We first introduce the elements $v_1^{p^{r-1}}$.
This is a generalization of \cite[Construction 2.7]{bmsyntomic}, and follows the same strategy.
\begin{construction}[The class $v_1^{p^{r-1}}$]
\newcommand{\Zcyc}{\bbZ[\zeta_{p^\infty}]}
For $r \geq 1$, we define as follows a certain class
\[ v_1^{p^{r-1}} \in \rmH^0(\bbZ/p^r(p^{r-1} (p-1))(\bbZ)) \]
in the mod-$p^r$ syntomic cohomology, as defined in \cite[\S 8.4]{apc}, of $\bbZ$.
The canonical system of roots of unity defines an element $\epsilon \in H^0(\bbZ/p^r(1)(\Zcyc))$; we claim that $\epsilon^{p^{r-1} (p-1)}$ descends to $\bbZ$, giving the desired class.
For this, we need to see that it lies in the equalizer of the two maps
\[ \rmH^0(\bbZ/p^r(p^{r-1} (p-1))(\Zcyc)) \rightrightarrows \rmH^0(\bbZ/p^r(p^{r-1} (p-1))(\Zcyc \tensor \Zcyc)) \mpunc. \]
Over $\Zcyc \tensor \Zcyc [\tfrac{1}{p}]$ this follows from the fact that $\epsilon^{p^r (p-1)}$ identifies with the canonical generator of the étale sheaf $\mu_{p^r}^{\tensor p^{r-1} (p-1)}$ over $\Zcyc[\tfrac{1}{p}]$, so it remains only to check over the $p$-completion $\Zpcyc \tensorhat \Zpcyc$.

Concretely, we have two classes $\epsilon_1, \epsilon_2 \in T_p(\Zpcyc \tensorhat \Zpcyc)^\times$, and we need to check that the elements
\[ (\log_\Prism \epsilon_1)^{p^{r-1} (p-1)}, (\log_\Prism \epsilon_2)^{p^{r-1} (p-1)} \in \Prism_{\Zpcyc \tensorhat \Zpcyc} \{p^{r-1} (p-1)\} / p^r \]
coincide.
Write $q_1, q_2 \in \Prism_{\Zpcyc \tensorhat \Zpcyc}$ for the two images of $q \in \Prism_{\Zpcyc} \isom \QdRcyc$.
By $(p, q-1)$-complete flatness of the two maps
\[ \QdR \rightrightarrows \Prism_{\Zpcyc \tensorhat \Zpcyc} \]
and \cite[Lemma 5.15]{cmaic}, the elements
\[ (q_1 - 1), (q_2 - 1) \in \Prism_{\Zpcyc \tensorhat \Zpcyc} / p^r \]
are nonzerodivisors, so we may check the equality in question after base change to
\[ \Prism_{\Zpcyc \tensorhat \Zpcyc} / p^r [\tfrac{1}{(q_1 - 1)(q_2 - 1)}] \mpunc. \]
By \cite[Example 2.7.5]{apc} the elements
\[ \log_\Prism \epsilon_1, \log_\Prism \epsilon_2 \in \Prism_{\Zpcyc \tensorhat \Zpcyc} \{1\} / p^r [\tfrac{1}{(q_1 - 1)(q_2 - 1)}] \]
are generators, so there exists some unit
\[ x \in \Prism_{\Zpcyc \tensorhat \Zpcyc} / p^r [\tfrac{1}{(q_1 - 1)(q_2 - 1)}] \]
such that $\log_\Prism \epsilon_1 = x \log_\Prism \epsilon_2$.
Since these elements are fixed by the divided Frobenius, we have $\phi(x) = x$, so by \Cref{frobliftcongruence} we find that $x^{p^r} = x^{p^{r-1}}$.
Thus $x^{p^{r-1} (p-1)} = 1$, implying the desired equality.
\end{construction}

Note that $v_1^{p^r}$ is equal to $\prn*{v_1^{p^{r-1}}}^p$ after reduction modulo $p^r$, justifying the above notation.


\begin{proposition}[The étale comparison, {\cite[Theorem 5.1]{bmsyntomic}}]\label{etalecomparison}
For any qcqs derived scheme $X$, the natural map
\[ \bigoplus_n \bbZ/p^r(n)(X) \to \bigoplus_n \bbZ/p^r(n)(X[\tfrac{1}{p}]) \]
identifies the target with the localization of the source at $v_1^{p^{r-1}}$.
\end{proposition}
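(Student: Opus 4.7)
The plan is to reduce to \cite[Theorem 5.1]{bmsyntomic}, the main addition being the verification that our globally defined class $v_1^{p^{r-1}} \in \rmH^0(\bbZ/p^r(p^{r-1}(p-1))(\bbZ))$ agrees, after base change, with the local element used in \emph{loc.\ cit.}\ to state the étale comparison. Since both sides of the map satisfy fpqc descent and every qcqs derived scheme is obtained from affines by finite colimits, I first reduce to $X = \Spec R$, and then by quasisyntomic descent to the case where $R$ is QRSP (or even perfectoid).

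In this situation, $\bbZ/p^r(n)(R)$ sits in the fiber sequence
\[ \bbZ/p^r(n)(R) \to \FilN^n \Prismhat_R\{n\}/p^r \xrightarrow{\tilde\phi_n - \can} \Prismhat_R\{n\}/p^r, \]
and our $v_1^{p^{r-1}}$ pulled back to $\Zpcyc$ is by construction the class $(\log_\Prism \epsilon)^{p^{r-1}(p-1)}$, matching the element used by BMS. Moreover, under the étale comparison of \cite{bms1}, the target of the map identifies with $\bigoplus_n \rmH^*(X[\tfrac{1}{p}]_\et, \mu_{p^r}^{\otimes n})$, and multiplication by $v_1^{p^{r-1}}$ becomes the Tate twist by a generator of $\mu_{p^r}^{\otimes p^{r-1}(p-1)}$, which is an isomorphism of étale sheaves. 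Thus the natural map factors through the localization at $v_1^{p^{r-1}}$, and it remains to show that this induced map from the localization is an equivalence.

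The core of the argument, which I would invoke from \cite[\S 5]{bmsyntomic}, is that the colimit of the Nygaard-filtered prismatic cohomology along multiplication by $v_1^{p^{r-1}}$ (which increases weight by $p^{r-1}(p-1)$) makes the Nygaard filtration exhaustive in each weight and turns $\tilde\phi_n - \can$ into the Frobenius-minus-identity map on $\xi$-inverted prismatic cohomology; the fiber of this latter map computes étale cohomology with Tate-twisted $\mu_{p^r}$-coefficients by a Fontaine--Messing / Artin--Schreier calculation. The main obstacle is the bookkeeping tracking how the weight shift interacts with the Nygaard filtration and the Frobenius, i.e.\ verifying that the colimit under $v_1^{p^{r-1}}$ kills the Nygaard gap uniformly in $n$ --- this is the technical heart of BMS's argument, and given the verification that our $v_1^{p^{r-1}}$ matches theirs, I would invoke it directly rather than rerun it.
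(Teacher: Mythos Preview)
Your approach is sound but more laborious than the paper's. The paper's proof is essentially two lines: since the image of $v_1^{p^{r-1}}$ in the target is invertible (it maps to the canonical generator of $\mu_{p^r}^{\otimes p^{r-1}(p-1)}$), the map factors through the localization, and to check that the induced map from the localization is an equivalence one simply reduces modulo $p$ (both sides are $p^r$-torsion, so a standard d\'evissage applies). Modulo $p$ the class $v_1^{p^{r-1}}$ becomes $(v_1)^{p^{r-1}}$, so inverting it is the same as inverting $v_1$, and one lands exactly on the $r=1$ statement of \cite[Theorem 5.1]{bmsyntomic}. No reduction to affines or QRSP rings, and no matching of elements beyond this, is needed.

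By contrast, you propose to re-run the Nygaard/Frobenius argument of \cite[\S 5]{bmsyntomic} directly at level $r$ after descending to QRSP rings. This can be made to work, but the element-matching you carry out is subsumed by the mod-$p$ reduction above. One caution: your quasisyntomic descent step implicitly asks that localization at $v_1^{p^{r-1}}$ (a filtered colimit) commute with the descent totalization, which is not automatic and would need justification; the paper avoids this entirely by never leaving the generality of the cited theorem.
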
 \begin{proof}
As the image of $v_1^{p^{r-1}}$ in the target is invertible, we just need to show that the induced map
\[ \prn*{\bigoplus_i \bbZ/p^r(n)(X)}\brk*{\frac{1}{v_1^{p^{r-1}}}} \to \bigoplus_i \bbZ/p^r(n)(X[\tfrac{1}{p}]) \]
is an equivalence.
This may be checked modulo $p$, where it is \emph{loc.\ cit.}
\end{proof}

\begin{proposition}[$(p, v_1)$-completeness of $\Zp^\Syn$]\label{v1nilpotent}
The natural map
\[ \colim_r\ (\Zp^\Syn)_{0 = p^r = v_1^{p^{r-1}}} \to \Zp^\Syn \]
is an equivalence.
\end{proposition}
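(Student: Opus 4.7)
The plan is to separate the $p$-completeness and the $v_1$-completeness, handling the former trivially and reducing the latter to an emptiness statement that follows from \Cref{etalecomparison}.

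Since $\Zp^\Syn$ is a formal stack over $\Spf \Zp$, the class $p$ is topologically nilpotent on it, yielding the equivalence $\Zp^\Syn \equiv \colim_r (\Zp^\Syn)_{0 = p^r}$ automatically. It therefore remains to show, for each fixed $r \geq 1$, that $v_1^{p^{r-1}}$ is topologically nilpotent on the closed substack $(\Zp^\Syn)_{0 = p^r}$; equivalently, that the open substack
\[ U_r \defeq ((\Zp^\Syn)_{0 = p^r})\brk*{\frac{1}{v_1^{p^{r-1}}}} \]
is empty.

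To verify emptiness of $U_r$, I would test on QRSP covers. Given any map $\Spec R \to (\Zp^\Syn)_{0 = p^r}$ with $R$ QRSP and $p^r = 0$ in $R$, the pullback of $v_1^{p^{r-1}}$ is, by construction, a scalar multiple of $(\log_\Prism \epsilon_R)^{p^{r-1}(p-1)}$ living in $\FilN^{p^{r-1}(p-1)} \Prismhat_R / p^r$. The idea is then to observe that inverting $v_1^{p^{r-1}}$ converts syntomic cohomology of $\Spf \Zp$ into $p$-inverted étale cohomology of $\Spec \Qp$ by \Cref{etalecomparison}, which cannot be supported on the $p$-adic formal stack $\Zp^\Syn$; consequently the above element must be nilpotent.

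The main obstacle is the precise passage from the cohomological identity of \Cref{etalecomparison} to the stacky emptiness of $U_r$. I would handle this by reducing via the fpqc cover of $(\Zp^\Syn)_{0 = p^r}$ by syntomizations of QRSP quotients of $\OCp$, over which $v_1^{p^{r-1}}$ becomes an explicit prismatic logarithm whose nilpotence modulo $p^r$ can be checked directly using the $(p, I)$-adic completeness of $\Prism_{\OCp}$ together with the vanishing of the Hodge--Tate fiber over any $p$-inverted locus.
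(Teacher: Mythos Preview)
Your detour through \Cref{etalecomparison} is a genuine wrong turn. The étale comparison tells you that the $v_1^{p^{r-1}}$-localization of $\bigoplus_n \bbZ/p^r(n)(\Zp)$ is $\bigoplus_n \bbZ/p^r(n)(\Qp)$, which is \emph{nonzero}. So rather than showing that $U_r$ is empty, it shows that the $v_1$-inverted graded global sections are nontrivial. These two statements are not in tension: for a formal stack, a section can be topologically nilpotent while its localization on global sections is nonzero (compare $p$ on $\Spf \Zp$, where $\Zp[1/p] = \Qp \neq 0$ yet $\colim_r \Spec \bbZ/p^r \simeq \Spf \Zp$). In short, \Cref{etalecomparison} computes the wrong thing for your purpose; the phrase ``cannot be supported on the $p$-adic formal stack'' has no content here, and your own observation that this passage is ``the main obstacle'' is correct---it is in fact an obstruction.

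What remains once you discard the étale comparison is your final paragraph: pass to an explicit fpqc cover and check directly that $v_1$ lies in the radical of the ideal of definition. This is exactly the paper's proof, which you have not actually carried out. The paper base changes along
\[
\Spf\prn*{\bigoplus_n \FilN^n \Prism_{\Zpcyc}\{n\} t^{-n}}\pIcpl \to \Zp^\Syn,
\]
identifies $v_1$ with $((\log_\Prism \epsilon)t^{-1})^{p-1}$, notes that $(\log_\Prism \epsilon)t^{-1}$ is divisible by $q^{1/p}-1$ (via \cite[Proposition 2.6.1]{apc}), and concludes from the elementary inclusion $(p, q^{1/p}-1)^p \subseteq (p, q-1)$. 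That last ideal is the ideal of definition, so $(p, v_1)$-completeness follows. Your sketch gestures at exactly this computation but supplies none of the three ingredients (the specific cover, the identification of $v_1$, and the divisibility/inclusion), and the vague appeal to ``vanishing of the Hodge--Tate fiber over any $p$-inverted locus'' does not substitute for them.
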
 \begin{proof}
It suffices to check this after base change by the fpqc cover
\[ \Spf \prn*{\bigoplus_n \FilN^n \Prism_\Zpcyc \{n\} t^{-n}}\pIcpl \to \Zp^\Syn \]
of the target.
Here we may identify $v_1$ with $\prn*{(\log_\Prism \epsilon) t^{-1}}^{p-1}$, so we are reduced to checking that the coordinate ring of the source is $\prn*{p, (\log_\Prism \epsilon) t^{-1}}$-complete.
By \cite[Proposition 2.6.1]{apc} we know that $(\log_\Prism \epsilon) t^{-1}$ is divisible by $q^{1/p} - 1$, so the desired statement follows from the inclusion
\[ (p, q^{1/p} - 1)^p \subseteq (p, \phi(q^{1/p} - 1)) = (p, q-1) \mpunc. \qedhere \]
\end{proof}

In the following we regard the syntomization of a ring as a graded stack via the map to $\BGm$ classifying the Breuil--Kisin line bundle $\calO\{1\}$.
We do the same for the vanishing loci of powers of $p$ and $v_1$.

\begin{proposition}[$p$-adic Quillen--Lichtenbaum for $\Zp$]\label{quillenlichtenbaum}
For each $r \geq 1$ there exists some finite $n$ such that the graded complex
\[ \RGammagr\prn[\Big]{(\Zp^\Syn)_{0 = p^r = v_1^{p^{r-1}}}} \]
is concentrated in weights $[0, n]$.
\end{proposition}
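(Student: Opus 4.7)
The plan is to use the étale comparison of \Cref{etalecomparison} to isolate the contribution of the closed point of $\Spec \Zp$, and then to observe that the mod-$p^r$ syntomic cohomology of $\bbF_p$ vanishes in all weights except $0$. Unpacking the statement, set $d \defeq p^{r-1}(p-1)$, $v \defeq v_1^{p^{r-1}}$ (so $v$ is homogeneous of weight $d$), and $M \defeq \bigoplus_n \bbZ/p^r(n)(\Zp)$. Then $\RGammagr\bigl((\Zp^\Syn)_{0 = p^r = v_1^{p^{r-1}}}\bigr)$ identifies with $M/v$, so the claim is that this complex is supported in finitely many weights.

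By \Cref{etalecomparison}, the restriction $M \to N \defeq \bigoplus_n \bbZ/p^r(n)(\Qp)$ exhibits $N$ as the graded $v$-localization of $M$. Since $v$ is invertible on $N$, we have $N/v = 0$; applying $\cofib(v \cdot (-))$ to the fiber sequence
\[ \Gamma_v(M) \to M \to N \]
(with $\Gamma_v(M)$ the fiber, i.e., the derived $v$-torsion of $M$) thus gives an equivalence $M/v \equiv \Gamma_v(M)/v$. By the standard localization triangle for the open-closed decomposition of $\Spec \Zp$, the fiber $\Gamma_v(M)$ identifies with the local cohomology $\RGamma_{\{p\}}(\Spec \Zp, \bbZ/p^r(*))$ at the closed point, and a purity statement for mod-$p^r$ syntomic cohomology --- of the kind used implicitly in the proof of \Cref{etalecomparison} via BMS excision --- further identifies this in weight $n$, up to a fixed cohomological shift, with $\bbZ/p^r(n-1)(\bbF_p)$.

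It then suffices to show that $\bbZ/p^r(m)(\bbF_p) = 0$ for every $m \neq 0$. Since $\Prismhat_{\bbF_p}$ is the crystalline prism $(\Zp, (p))$ with Frobenius the identity, the Nygaard filtration satisfies $\FilN^m \Prismhat_{\bbF_p} = p^m \Zp$ for $m \geq 0$ and equals all of $\Prismhat_{\bbF_p}$ for $m < 0$. After trivializing the Breuil--Kisin twist, the map $\tilde\phi_m - \can$ on $\FilN^m \Prismhat_{\bbF_p}\{m\}/p^r$ becomes multiplication by $1 - p^m$ on $\Zp/p^r$ (respectively by $1 - p^{|m|}$ for $m < 0$), which is a unit modulo $p^r$ as soon as $m \neq 0$, so the fiber vanishes.

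Combining these, $\Gamma_v(M)$ is concentrated in weight $1$, and hence $M/v \equiv \Gamma_v(M)/v$ lives only in the weights $\{1, 1+d\}$; this yields the desired bound with $n = 1 + p^{r-1}(p-1)$. The most delicate step will be pinning down the purity identification for local cohomology at $\Spec \bbF_p \subset \Spec \Zp$, but the precise cohomological shift there is immaterial for the weight bound at issue.
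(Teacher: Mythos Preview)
The purity step is where your argument breaks. The identification $\Gamma_v(M)_n \equiv \bbZ/p^r(n-1)(\bbF_p)[-2]$ cannot hold for all $n$: for $n < 0$ one has $M_n = 0$ by the vanishing of syntomic cohomology in negative weights, while $N_n = \bbZ/p^r(n)(\Qp)$ is the \'etale cohomology of a local field in a nonzero Tate twist and is therefore nonzero, so $\Gamma_v(M)_n = N_n[-1] \neq 0$; yet your formula gives $\bbZ/p^r(n-1)(\bbF_p)[-2] = 0$. Thus $\Gamma_v(M)$ is not concentrated in weight $1$---it is supported in every weight $\leq 1$---and your stated bound is already wrong in weight $0$: $(M/v)_0 = \cofib(M_{-d} \to M_0) = M_0$, which has $H^0 = \bbZ/p^r$. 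Whatever Gysin sequence one writes down here can only match your formula in weights $n \geq 1$; below that, the fiber carries the ramified part of the arithmetic of $\Qp$, not just the cohomology of $\bbF_p$. (Your pointer to the proof of \Cref{etalecomparison} is also off: that proof simply reduces modulo $p$ and cites \cite{bmsyntomic}, with no excision or purity input.)

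The paper's proof avoids computing $\Gamma_v(M)$ altogether. It quotes the negative-weight vanishing of $M$ for the lower bound, and for the upper bound shows directly that $M_n \to N_n$ is an equivalence for all sufficiently large $n$, using \cite[Proposition 5.2]{bmsyntomic} together with the fact that both sides are uniformly cohomologically bounded. Your Gysin argument, once restricted to weights $n \geq 2$, would prove exactly this equivalence (since $\bbZ/p^r(n-1)(\bbF_p) = 0$ there), so the salvage is to use purity only for the upper bound and invoke negative-weight vanishing of $M$ separately for the lower bound---at which point you have recovered the paper's argument, with the explicit range $[0, 1+d]$ rather than $\{1, 1+d\}$.
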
 \begin{proof}
By \Cref{etalecomparison} and the vanishing of syntomic cohomology in negative weights \cite[Proposition 7.16]{bms2}, it suffices to see that the syntomic cohomology of $\Zp$ agrees with the étale cohomology of $\Qp$ in sufficiently high weights.
But this is well-known, e.g.\ by \cite[Proposition 5.2]{bmsyntomic} and the fact that both are concentrated in uniformly bounded degrees.
\end{proof}

\begin{theorem}\label{drinfeldfgalg}
The Drinfeld formal group admits an algebraization over $\Zp^\Syn$, in the sense of \cite[\S 2.12.1]{drinfeldfg}.
After pullback to $\Zp^\Prism$, this is the algebraization of \cite[Conjecture 2.12.4]{drinfeldfg}.
\end{theorem}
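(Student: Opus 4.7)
The plan is to combine the $(p, v_1)$-completeness of $\Zp^\Syn$ from Proposition \ref{v1nilpotent} with the $p$-adic Quillen--Lichtenbaum statement of Proposition \ref{quillenlichtenbaum}, feeding the resulting bounded-weight situation into Construction \ref{bddfgalmostalg}. Writing $Y_r \defeq (\Zp^\Syn)_{0 = p^r = v_1^{p^{r-1}}}$, the equivalence $\Zp^\Syn \equiv \colim_r Y_r$ reduces the problem to constructing compatible algebraizations of the restrictions $\GDr|_{Y_r}$; compatibility as $r$ varies will come for free from the uniqueness of algebraizations recorded in \cite[\S 2.12.1]{drinfeldfg}.

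By Proposition \ref{quillenlichtenbaum} the graded ring $\RGammagr(Y_r)$ lives in finitely many weights, so Lemma \ref{bddfgtensorcomplete} applies to $\hatG \defeq \GDr|_{Y_r}$ and Construction \ref{bddfgalmostalg} produces a candidate relative group stack
\[ G_r^\aff \defeq \SpecBGm \RGammagr(\hatG) \longrightarrow Y_r^\aff \defeq \SpecBGm \RGammagr(Y_r) \]
equipped with a comparison map $\hatG^\aff \to G_r^\aff$. Using the affineness of $\Mfg$ (Proposition \ref{Mfgaffine}) to transfer graded formal groups between $Y_r$ and its graded affinization $Y_r^\aff$, the desired algebraization over $Y_r$ will be the pullback of $G_r^\aff$ along the unit $Y_r \to Y_r^\aff$.

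The heart of the argument is to verify that $G_r^\aff$ really is an algebraization, i.e.\ that its underlying pointed stack is locally isomorphic to $(\hat\bbA^1, 0)$ and that $\hatG^\aff \to G_r^\aff$ exhibits the source as the completion along the identity. For this I would base change along the fpqc cover
\[ \Spf \prn*{\bigoplus_n \FilN^n \Prism_{\Zpcyc} \{n\} t^{-n}}\pIcpl \to \Zp^\Syn \]
used in the proof of Proposition \ref{v1nilpotent}. The pullback of $Y_r$ along this cover is a graded affine formal scheme whose coordinate ring is killed by a power of $(\log_\Prism \epsilon) t^{-1}$ and hence concentrated in finitely many weights. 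On this cover, Drinfeld's explicit algebraization of $\GDr$ over $\QdR$---namely $\Spf \QdR[z]\pqmocpl$ with the indicated group law---base changes to a genuine algebraization with the desired local structure, and it must coincide with the pullback of $G_r^\aff$ by the universal property of graded affinization in the bounded-weight setting. Faithfully flat descent then transports this structure back to $Y_r$. Passing to the colimit over $r$ yields $\GDralg$ over $\Zp^\Syn$, and its identification with Drinfeld's algebraization on $\Zp^\Prism$ is immediate from uniqueness.

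The main obstacle will be the last comparison: verifying that Construction \ref{bddfgalmostalg}, which a priori only produces an abelian cogroup in graded derived rings, genuinely matches Drinfeld's explicit algebraization after the cover. Concretely, this amounts to checking that when the base is a graded affine formal scheme $\Spf R$ with $R$ ordinary and concentrated in finitely many weights, the graded affinization of the coordinate ring of $\hatG$ agrees with its underlying completed graded ring, so that the derived-ring refinement in Construction \ref{bddfgalmostalg} introduces no new information there. Once this is in place, the smoothness, group-scheme structure, and formal-group-law formula can all be read off directly from Drinfeld's description of $\GDr$ over $\QdR$.
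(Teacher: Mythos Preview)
Your overall architecture matches the paper's: use \Cref{v1nilpotent} and \Cref{quillenlichtenbaum} to reduce to the bounded-weight situation where \Cref{bddfgalmostalg} applies, then verify the output really is an algebraization after an fpqc cover. The paper also obtains compatibility in $r$ simply by naturality of \Cref{bddfgalmostalg}, rather than by invoking uniqueness of algebraizations; your appeal to \cite[\S 2.12.1]{drinfeldfg} for this is shaky, since that uniqueness is stated for formal groups over a fixed base, not for the derived graded affinizations $Y_r^\aff$.

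The real divergence, and the gap, is in the verification step. You propose to pull back to the Nygaard cover over $\Zpcyc$ and compare with Drinfeld's explicit algebraization over $\QdR$. But Drinfeld's algebraization lives over $\Zp^\Prism$, whereas your cover is a chart for $\Zp^\Nyg$; there is no a priori algebraization on the Nygaard side to compare with, so you cannot bootstrap the ``smooth affine group scheme with the right completion'' properties from Drinfeld's description. Your sentence ``it must coincide with the pullback of $G_r^\aff$ by the universal property of graded affinization'' is doing work that has not been justified: \Cref{bddfgalmostalg} produces a specific object $\SpecBGm \RGammagr(\hatG)$, and identifying it requires actually computing $\RGammagr(\hatG)$ on the cover, not a universal property.

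The paper resolves this by passing instead to $\OCp^\Syn$, where two explicit computations are available: $\RGammagr(\OCp^\Syn) \isom \Zp[\beta]$ with $\beta = \log_\Prism \epsilon$ (from \cite[Theorem 1.3.6]{hntc}), and $\GDr_{\OCp^\Syn}$ is the Quillen formal group of $\TC(\OCp; \Zp) \equiv \ku\pcpl$ (via \Cref{drinfeldquillenfg} and \cite[Corollary 1.3.8]{hntc}), hence has the familiar multiplicative formal group law $z \mapsto z_1 + z_2 + \beta z_1 z_2$. With this in hand, \Cref{bddfgalmostalg} visibly outputs $\SpfBGm \Zp\Brk{\beta}[z]_{(p,\beta)}^\compl$ with the same group law, which is manifestly the desired algebraization; the match with Drinfeld's conjectured algebraization over $\Zp^\Prism$ then follows by further base change to $\OCp^\Prism$. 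The key point your proposal is missing is this use of the Quillen description over $\OCp$ to perform the explicit computation on the syntomic side.
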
 \begin{proof}
Recall that the cotangent space of $\GDr$ is the Breuil--Kisin line bundle $\calO_{\Zp^\Syn}\{1\}$.
Thus from \Cref{quillenlichtenbaum} and \Cref{bddfgalmostalg} we obtain, naturally in each $r \geq 1$, a map of group stacks
\[ \GDr_{0 = p^r = v_1^{p^{r-1}}} \to \GDralg_{0 = p^r = v_1^{p^{r-1}}} \]
over the stack $(\Zp^\Syn)_{0 = p^r = v_1^{p^{r-1}}}$.
By \Cref{v1nilpotent} we obtain a map of group stacks
\[ \GDr \to \GDralg \]
over $\Zp^\Syn$ in the colimit.
We first need to check that this map is actually an algebraization, i.e.\ that its target is a smooth affine group scheme and that the map is the inclusion of the completion at the identity.
As these properties may be checked locally, it suffices to show this after base change to $\OCp^\Syn$.
Setting $\beta \defeq \log_\Prism \epsilon$, we see by \cite[Theorem 1.3.6]{hntc} that
\[ \RGammagr(\OCp^\Syn) \isom \Zp[\beta] \mpunc, \]
with $\beta$ in weight $1$.
Note that
\[ \beta^{p^{r-1} (p-1)} \cong v_1^{p^{r-1}} \pmod{p^r} \]
for all $r \geq 1$, so we may apply \Cref{bddfgalmostalg} again to obtain a map of group stacks
\[ \GDraff_{\OCp^\Syn} \to \GDraffalg_{\OCp^\Syn} \]
over the stack
\[ \colim_r \SpecBGm \RGammagr\prn[\Big]{(\OCp^\Syn)_{0 = p^r = v_1^{p^{r-1}}}} \isom \SpfBGm \Zp\Brk{\beta} \mpunc. \]
It will suffice to check the properties of interest here.
Using \Cref{drinfeldquillenfg} and the identification $\ku\pcpl \equiv \TC(\OCp; \Zp)$ described in \cite[Corollary 1.3.8]{hntc} we find
\[ \GDraff_{\OCp^\Syn} \isom \SpfBGm \Zp\Brk{\beta, z} \]
with formal group law
\[ z \mapsto z_1 + z_2 + \beta z_1 z_2 \mpunc. \]
Modulo $(p, \beta)^n$, \Cref{bddfgalmostalg} associates to this the map of group stacks
\[ \SpfBGm \Zp\Brk{\beta, z} / (p, \beta)^n \to \SpfBGm \Zp\Brk{\beta}[z] / (p, \beta)^n \]
with the same group law, so in the colimit we obtain
\[ \SpfBGm \Zp\Brk{\beta, z} \to \SpfBGm \Zp\Brk{\beta}[z]_{(p,\beta)}^\compl \mpunc, \]
which is of the desired form.

It remains to show that this algebraization is that of the conjecture.
By \cite[\S 2.12.2]{drinfeldfg} we may as above pass to $\OCp^\Prism$, where by \cite[\S 2.12.5(i)]{drinfeldfg} we need to show that it is given by
\[ \Spf \Prism_{\OCp}\Brk{z} \to \Spf \Prism_{\OCp}[z]\pIcpl \mpunc, \]
with group law given by
\[ z \mapsto z_1 + z_2 + \mu z_1 z_2 \mpunc. \]
But this follows by base change from the computation above.
\end{proof}

\begin{recollection}[The section $s$]
The formal group $\GDr_{\Zp^\Prism}$ admits a certain section $s$ defined in \cite[\S 2.9.1]{drinfeldfg}.
Over the $q$-de Rham prism, this section is given by
\[ \Zp\Brk{q-1, z} \ni z \mapsto [p]_q \in \QdR \mpunc. \]
The algebraization $\GDralg_\QdR$ thus admits a section
\[ \tilde{s}_\QdR : z \mapsto 1 \]
such that $s_\QdR = p \tilde{s}_\QdR$ \cite[\S 2.10.7]{drinfeldfg}.
Here the right-hand side denotes multiplication by $p$ in the abelian group $\GDralg_\QdR$.
\end{recollection}

\begin{lemma}[The section $\tilde{s}$]\label{stildedescent}
If $(A, I)$ is a prism over the $q$-de Rham prism such that $p \neq 0$ in $A / (q-1)$, then there is a unique section $\tilde{s}_A$ of $\GDralg_A$ such that $s_A = p \tilde{s}_A$.

There is thus a unique section $\tilde{s}$ of $\GDralg_{\Zp^\Prism}$ such that $s = p \tilde{s}$.
\end{lemma}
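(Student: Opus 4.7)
The plan is to establish existence and uniqueness of $\tilde{s}_A$ for prisms over $\QdR$ by a direct coordinate computation, and then to descend the resulting canonical section along $\QdR \to \Zp^\Prism$ using that uniqueness. For existence, the section $\tilde{s}_\QdR \colon z \mapsto 1$ on $\GDralg_\QdR$ already satisfies $p \tilde{s}_\QdR = s_\QdR$ by construction, so for any prism $(A, I)$ over $\QdR$ we take $\tilde{s}_A$ to be the pullback.

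For uniqueness, I would pass to the multiplicative coordinate $w = 1 + (q-1) z$, in which the group law $z \mapsto z_1 + z_2 + (q-1) z_1 z_2$ becomes the multiplicative law $(w_1, w_2) \mapsto w_1 w_2$, so that multiplication by $p$ is $w \mapsto w^p$. Two candidate sections $\tilde{s}, \tilde{s}'$ correspond to units $w, w' \in 1 + (q-1) A \subseteq A^\times$ with $w^p = w'^p = 1 + (q-1) s_A$. Writing the ratio as $\varepsilon = w/w' = 1 + (q-1)\eta$ and expanding $\varepsilon^p = 1$, one cancels a factor of $(q-1)\eta$ using the $(q-1)$-torsion freeness of $A$ (automatic for prisms over $\QdR$) to obtain
\[ \eta \cdot \bigl(p + (q-1) c\bigr) = 0 \]
for some $c \in A$. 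The second factor is congruent to $p$ modulo $(q-1)$; by the hypothesis, combined with the $(q-1)$-torsion freeness and the $(q-1)$-adic separatedness of $A$, a standard lift-of-nonzerodivisor-through-$(q-1)$ argument shows that $p + (q-1) c$ is itself a nonzerodivisor in $A$. Hence $\eta = 0$ and $\tilde{s} = \tilde{s}'$.

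For the global section $\tilde{s}$ on $\Zp^\Prism$, I would argue by descent. Since $s$ is already defined on $\Zp^\Prism$, its pullback $s_\QdR$ is invariant under the canonical $\Zptimes$-action on $\QdR$; applying the first part to $\QdR$ itself (for which $A/(q-1) = \Zp$, so the hypothesis holds), any $\Zptimes$-translate of $\tilde{s}_\QdR$ must still satisfy $p \cdot (g^* \tilde{s}_\QdR) = g^* s_\QdR = s_\QdR$ and therefore equal $\tilde{s}_\QdR$. Thus $\tilde{s}_\QdR$ is $\Zptimes$-equivariant and descends along the presentation of $\Zp^\Prism$ by $\QdR$ together with its $\Zptimes$-symmetries. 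More robustly, one applies \v{C}ech descent along a suitable fpqc cover by $\QdR$: each face of the \v{C}ech nerve is a prism receiving a map from $\QdR$ and in which $p$ remains nonzero modulo $(q-1)$, so the uniqueness of the first part makes the descent datum effective.

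The main obstacle is the descent step: one must verify that a cover of $\Zp^\Prism$ exists whose \v{C}ech nerve stays within prisms satisfying the hypothesis of the first part of the lemma. This can be handled either by using the groupoid presentation of $\Zp^\Prism$ via $\QdR$ and its $\Zptimes$-symmetries, or by an explicit analysis of the iterated prismatic self-products $\QdR \widehat{\otimes}_{\Zp^\Prism} \QdR$; either way, the combination of uniqueness from the first part and $\Zptimes$-equivariance of $\tilde{s}_\QdR$ is what makes deeper structural input unnecessary.
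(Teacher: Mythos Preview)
Your approach matches the paper's: the paper stays in the $z$-coordinate and shows that $([p](x)-[p]_q)/(x-1)\equiv p\pmod{q-1}$, which is exactly your multiplicative-coordinate computation unwound; for descent the paper applies the first part to $\Prism_{\Zpcyc}$ and its self-products (as prisms) rather than to $\QdR$.

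Two cautions on your version. First, the presentation $\Zp^\Prism\simeq[\Spf\QdR/\Zptimes]$ you invoke is not correct---for instance, over an algebraically closed field $k$ of characteristic $p$ the $\Zptimes$-orbits on $1+pW(k)$ give many isomorphism classes in $[\Spf\QdR/\Zptimes](k)$, whereas $\Zp^\Prism(k)$ is a single point---so only your \v{C}ech argument survives. Second, your ``lift-of-nonzerodivisor'' step literally needs $p$ to be a nonzerodivisor in $A/(q-1)$, not merely $p\neq 0$; this stronger condition does hold for the prisms appearing in the relevant \v{C}ech nerve, so the application to the second statement is unaffected.
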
 \begin{proof}
Let $(A, I)$ be such a prism, so we may write the coordinate ring of $\GDralg_A$ as
\[ A[z]\pIcpl \]
with group law
\[ z \mapsto z_1 + z_2 + (q-1) z_1 z_2 \mpunc. \]
We need to show that there is a unique $x \in A$ such that $[p](x) = [p]_q$, where $[p](-)$ is the $p$-series of the group law.
Note that this admits the solution $x = 1$, so we need to see that the equation
\[ \frac{[p](x) - [p]_q}{x-1} = 0 \]
admits no solutions.
But modulo $q-1$ this becomes $p = 0$, which admits no solutions by assumption.

The second statement follows by descent from the first applied to $\Prism_{\Zpcyc}$ and its tensor powers (as a prism).
\end{proof}

\appendix

\section{Limits of localizations of categories}

\begin{proposition}\label{limlocalizations}
Let $\calI$ be a small category and
\[ \calC_{(-)} : \calI \to \Cathat \]
a functor from $\calI$ to the category of large categories.
Suppose we are given for each $i \in \calI$ a full subcategory $\iota_i : \calD_i \inj \calC_i$ whose inclusion admits a left adjoint $L_i$.
Suppose further that the categories $\calD_i$ are preserved under the functors of $\calC_{(-)}$, i.e.\ that for each morphism $f : i \to j$ in $\calI$ we have $C_f(\calD_i) \subseteq \calD_j$.

Write $\calC \defeq \lim_{i \in \calI} \calC_i$, and let $\iota : \calD \to \calC$ be the full subcategory of objects whose image in each $\calC_i$ lands in $\calD_i$.
Then the inclusion $\iota$ admits a left adjoint $L$, and for each $i \in \calI$ the diagram
\[\xymatrix{
\calD \ar[d] \ar[r]^\iota & \calC \ar[d] \\
\calD_i \ar[r]^{\iota_i} & \calC_i
}\]
is left adjointable (in the sense of \cite[Definition 4.7.4.13]{ha}).
\end{proposition}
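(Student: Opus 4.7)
The plan is to promote the levelwise adjunctions $L_i \dashv \iota_i$ to an adjunction of $\calI$-shaped diagrams in $\Cathat$, and then apply the limit functor $\lim_\calI$.

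First I would note that the preservation hypothesis is exactly the statement that the fully faithful inclusions $\iota_i$ assemble into a natural transformation $\iota : \calD_{(-)} \to \calC_{(-)}$ in $\Fun(\calI, \Cathat)$, where $\calD_{(-)}$ is the subfunctor whose value on $i$ is $\calD_i$ and whose value on $f : i \to j$ is the restriction $C_f|_{\calD_i}$. Passing to limits produces a fully faithful inclusion $\lim_\calI \calD_i \inj \calC$, and unravelling the definitions identifies its source with the full subcategory $\calD$ of the proposition.

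The heart of the argument is to lift the $L_i$'s to a natural transformation $L : \calC_{(-)} \to \calD_{(-)}$ of functors $\calI \to \Cathat$. For each $f : i \to j$ in $\calI$, naturality requires that the canonical Beck--Chevalley map
\[ L_j \circ C_f \longrightarrow (C_f|_{\calD_i}) \circ L_i \mpunc, \]
assembled from the unit of $L_i \dashv \iota_i$, the identification $C_f \circ \iota_i \simeq \iota_j \circ (C_f|_{\calD_i})$, and the counit of $L_j \dashv \iota_j$, be an equivalence. Equivalently, one must verify that $C_f$ carries $\calD_i$-local equivalences in $\calC_i$ to $\calD_j$-local equivalences in $\calC_j$. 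I expect this step to be the main obstacle: bare preservation of local objects is weaker than preservation of local equivalences, so one must exploit the full strength of the coherence of $\calC_{(-)}$ as a functor of $\infty$-categories (and in the intended applications, the fact that the transition functors are themselves compatibly constructed as e.g.\ base changes or tensor products with the localization data).

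With $L$ in hand as a natural transformation of diagrams, the units and counits of the levelwise adjunctions $L_i \dashv \iota_i$ assemble into natural transformations $\mathrm{id} \to \iota L$ and $L \iota \to \mathrm{id}$ in $\Fun(\calI, \Cathat)$, and the triangle identities hold pointwise and hence diagrammatically. Since the limit functor sends adjunctions of diagrams to adjunctions on their limits (as in \cite[\S 7.3.2]{ha}, or by direct inspection of mapping spaces in the limit), we obtain the desired adjunction $L \dashv \iota$ on $\calC$. Left adjointability of the displayed square is then automatic: by construction $L$ is obtained as $\lim_\calI$ of a natural transformation with components $L_i$, so the projection $p_i : \calC \to \calC_i$ (resp.\ $\calD \to \calD_i$) intertwines $L$ with $L_i$, and the Beck--Chevalley transformation $L_i \circ p_i \to p_i \circ L$ is an equivalence on the nose.
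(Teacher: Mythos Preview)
Your approach coincides with the paper's: both assemble the $\iota_i$ into an $\calI$-diagram of right adjoints and then pass to the limit, the paper via cocartesian fibrations and \cite[Corollary 4.7.4.18]{ha}, you via an adjunction in $\Fun(\calI, \Cathat)$. In either packaging the crucial input is exactly the Beck--Chevalley condition for the transition squares, i.e., that $L_j C_f \to (C_f|_{\calD_i}) L_i$ be an equivalence for each $f : i \to j$ in $\calI$.

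You are right to flag this as the obstacle, and in fact it cannot be overcome from the stated hypotheses: preservation of local objects does not imply preservation of local equivalences. Take $\calI$ to be the walking arrow, $\calC_0 = \calC_1$ the poset $\{a < b < c\}$ with $C_f = \mathrm{id}$, $\calD_0 = \{c\}$, and $\calD_1 = \{b, c\}$; then $C_f(\calD_0) \subseteq \calD_1$, yet $L_1 C_f(a) = b$ while $(C_f|_{\calD_0}) L_0(a) = c$, and the Beck--Chevalley map $b \to c$ is not invertible (so the left-adjointability conclusion of the proposition already fails for the projection to $i = 1$). The paper's proof asserts at precisely this point that preservation of the $\calD_i$ yields left adjointability of the transition squares, so it shares the same gap. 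Your hope that the higher coherence of $\calC_{(-)}$ might rescue the argument does not pan out, since the counterexample is $1$-categorical. What is actually required is the stronger hypothesis that each $C_f$ carry $L_i$-local equivalences to $L_j$-local equivalences; once this is assumed, your argument (equivalently, the paper's appeal to \cite[Corollary 4.7.4.18]{ha}) goes through without further difficulty.
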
 \begin{proof}
Let $\int\calC$ be the cocartesian fibration classified by $\calC_{(-)}$, and let $\int\calD$ be the full subcategory on the objects of the categories $\calD_i$.
The map $\int\calD \to \calI$ is clearly locally cocartesian, so by \cite[Remark 2.4.2.9]{htt} and the assumed preservation of the $\calD_i$ it is cocartesian.
Thus the inclusions $\calD_i \inj \calC_i$ assemble into an $\calI$-indexed functor $\calD_{(-)} \to \calC_{(-)}$.
Note that, as the limit of a diagram of categories is computed as cocartesian sections of its associated fibration, we have $\lim_{i \in \calI} \calD_i \equiv \calD$ compatibly with the maps to $\calC$ and $\calC_{(-)}$.

We thus have a limit diagram
\[ \calI^\triangleleft \to \Fun(\Delta^1, \Cathat) \]
encoding the categories above and the inclusion maps.
By hypothesis the subdiagram
\[ \calI \to \Fun(\Delta^1, \Cathat) \]
factors through the full subcategory of the target on those functors admitting left adjoints, and by the assumed preservation of the subcategories $\calD_i$ the morphisms in $\calI$ are sent to left adjointable diagrams.
In other words, this diagram factors as
\[ \calI \to \Fun^\mathrm{LAd}(\Delta^1, \Cathat) \mpunc, \]
where the target category is defined as in \cite[Definition 4.7.4.16]{ha}.
The result now follows immediately from \cite[Corollary 4.7.4.18]{ha}.
\end{proof}

\begin{proposition}\label{limlocalizationssym}
In the setting of \Cref{limlocalizations}, suppose that we are given a lift of $\calC_{(-)}$ to $\CAlg(\Cathat)$, and that each localization $L_i$ is compatible with the symmetric monoidal structure on $\calC_i$ (in the sense of \cite[Definition 2.2.1.6]{ha}).

Then $L$ is compatible with the induced symmetric monoidal structure on $\calC$.
As such, by \cite[Proposition 2.2.1.9]{ha}, $\calD$ obtains the structure of a symmetric monoidal category such that $L$ is symmetric monoidal and $\iota$ lax symmetric monoidal.

For each $i \in \calI$ the composite
\[ \CAlg(\calD) \to \CAlg(\calC) \to \CAlg(\calC_i) \]
lands in the full subcategory $\CAlg(\calD_i) \inj \CAlg(\calC_i)$, and the induced diagram
\[\xymatrix{
\CAlg(\calD) \ar[d] \ar[r] & \CAlg(\calC) \ar[d] \\
\CAlg(\calD_i) \ar[r] & \CAlg(\calC_i)
}\]
is left adjointable.
\end{proposition}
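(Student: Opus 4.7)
The plan is to first establish that the localization $L : \calC \to \calD$ is compatible with the symmetric monoidal structure on $\calC$ (in the sense of \cite[Definition 2.2.1.6]{ha}), which will unlock the remaining claims via \cite[Proposition 2.2.1.9]{ha}. The key tool here is the following characterization, which I would prove first: a morphism $f$ in $\calC$ is an $L$-equivalence if and only if each of its projections $f_i$ to $\calC_i$ is an $L_i$-equivalence. The forward direction uses the left adjointability supplied by \Cref{limlocalizations}, which provides a natural equivalence $L_i \circ \pi_i \simeq \pi_i \circ L$; the backward direction uses that $\calD = \lim_{i \in \calI} \calD_i$, so equivalences in $\calD$ are detected on each projection. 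Compatibility of $L$ with $\otimes$ is then immediate: given an $L$-equivalence $f$ and any $Z \in \calC$, the projection of $f \otimes \id_Z$ to $\calC_i$ is $f_i \otimes \id_{Z_i}$ (since each projection $\calC \to \calC_i$ is symmetric monoidal), and this is an $L_i$-equivalence by the compatibility hypothesis on $L_i$. Invoking \cite[Proposition 2.2.1.9]{ha} then endows $\calD$ with a symmetric monoidal structure for which $L$ is symmetric monoidal and $\iota$ is lax symmetric monoidal.

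To establish the statement about commutative algebras, I would apply \Cref{limlocalizations} a second time, now to the functor $\CAlg(\calC_{(-)}) : \calI \to \Cathat$ equipped with the full subcategories $\CAlg(\calD_i) \inj \CAlg(\calC_i)$. The hypotheses of \Cref{limlocalizations} are verified as follows: each inclusion $\CAlg(\iota_i)$ admits a left adjoint by \cite[Proposition 2.2.1.9]{ha} applied to the symmetric monoidal localization $L_i$, and the transition functors $\CAlg(\calC_f)$ preserve the subcategories $\CAlg(\calD_i)$ because the original symmetric monoidal functors $\calC_f$ preserve the $\calD_i$. \Cref{limlocalizations} thus provides a left adjoint to the inclusion of the full subcategory $\calE \inj \CAlg(\calC)$ on those algebras whose projection to each $\CAlg(\calC_i)$ lands in $\CAlg(\calD_i)$, along with left adjointability of each projection square.

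To conclude, I would identify $\calE$ with $\CAlg(\calD)$: since $\CAlg(\calD_i) \subseteq \CAlg(\calC_i)$ is the full subcategory on algebras whose underlying object lies in $\calD_i$, an algebra $A \in \CAlg(\calC)$ belongs to $\calE$ exactly when its underlying object projects into each $\calD_i$, which by the definition of $\calD$ is precisely the condition $A \in \CAlg(\calD)$. The a priori different left adjoints — the one induced by the symmetric monoidal $L$ on $\CAlg$ from the first paragraph, and the one produced by the second application of \Cref{limlocalizations} — must coincide, as both are left adjoint to the fully faithful inclusion $\CAlg(\iota)$. The left adjointability of the $\CAlg$ squares is then exactly the claim. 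I expect the only real subtlety to be bookkeeping: matching up the two descriptions of $\CAlg(\calD)$ and confirming that the symmetric monoidal structure on $\calC$ as a limit in $\CAlg(\Cathat)$ is the one that makes each projection symmetric monoidal, so that the compatibility argument in paragraph one goes through unobstructed.
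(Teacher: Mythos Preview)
Your proposal is correct and follows essentially the same approach as the paper: the paper's proof simply cites \cite[Example 2.2.1.7]{ha} for the first statement (which is precisely the criterion on $L$-equivalences you verify) and declares the second statement ``clear from the definitions.'' Your argument is considerably more detailed than the paper's two-line proof---in particular, your reapplication of \Cref{limlocalizations} to the diagram $\CAlg(\calC_{(-)})$ is a clean way to extract the left adjointability of the $\CAlg$ squares that the paper leaves implicit.
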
 \begin{proof}
The first statement follows from \cite[Example 2.2.1.7]{ha}, and the second is clear from the definitions.
\end{proof}

\begin{definition}\label{nonconncompl}
Let $A$ be an $\Einfty$-ring and $I \subseteq \pi_0 A$ a finitely generated ideal.
Then by \cite[\S 7.3.1]{sag} we have a subcategory
\[ (\Mod_{\tau_{\geq 0} A})_I^\compl \subseteq \Mod_{\tau_{\geq 0} A} \]
whose inclusion admits a symmetric monoidal left adjoint $(-)_I^\compl$.
We define
\[ (\Mod_A)_I^\compl \defeq \Mod_{A_I^\compl}\prn*{(\Mod_{\tau_{\geq 0} A})_I^\compl} \mpunc. \]
This naturally obtains the structure of a symmetric monoidal category, and by the results of \cite[\S 8]{sag} it is functorial in the pair $(A, I)$.
\end{definition}

\begin{construction}\label{loccomplfamily}
Let $\calI$ be a small category,
\[ A_{(-)} : \calI \to \CAlg(\Sp) \]
a family of commutative ring spectra parametrized by $\calI$, $I_{(-)} \subseteq \pi_0 A_{(-)}$ a family of finitely generated ideals, and $\scrS_{(-)} \subseteq \pi_* A_{(-)}$ a family of multiplicative subsets of homogeneous elements.
Suppose that, for each morphism $f : i \to j$ in $\calI$, the associated functor
\[ \Mod_{A_i} \to \Mod_{A_j} \]
sends $\scrS_i$-local modules to $\scrS_j$-local modules.
Then we define as follows a functor
\[ \prn*{(\scrS^{-1} A)_I^\compl}_{(-)} : \calI \to \CAlg(\Sp) \]
whose value at $i \in \calI$ identifies with $(\scrS_i^{-1} A_i)_{I_i}^\compl$.

We have a family of symmetric monoidal categories
\[ \Mod_{A_{(-)}} : \calI \to \CAlg(\Cathat) \]
along with a symmetric monoidal localization
\[ \Mod_{A_i} \to \scrS_i^{-1} \Mod_{A_i} \]
for each $i \in \calI$, where the target is defined as in \cite[\S 7.2.3]{ha}.
By \Cref{limlocalizationssym} we obtain a functor
\[ \prn{\scrS^{-1} \Mod_A}_{(-)} : \calI \to \CAlg(\Cathat) \mpunc. \]
Taking the unit object of the limit of this functor and applying the forgetful functor to $\CAlg(\Sp^\calI)$ yields a family of commutative ring spectra
\[ \prn{\scrS^{-1} A}_{(-)} : \calI \to \CAlg(\Sp) \mpunc. \]
We also obtain from $I_{(-)}$ a family of finitely generated ideals
\[ \prn{\scrS^{-1} I}_{(-)} \subseteq \pi_0 \prn{\scrS^{-1} A}_{(-)} \mpunc, \]
so \Cref{nonconncompl} yields a functor
\[ \prn*{(\scrS^{-1} \Mod_A)_I^\compl}_{(-)} : \calI \to \CAlg(\Cathat) \mpunc. \]
We obtain the desired object by once again taking the unit object of the limit and forgetting to $\CAlg(\Sp^\calI)$.
\end{construction}

\section{Congruences}

\begin{lemma}\label{frobliftcongruence}
Suppose $R$ is a ring with a lift of Frobenius $\phi$, and let $x$ be an element of $R$.
Then
\[ \phi(x)^{p^{r-1}} \cong x^{p^r} \pmod{p^r} \]
for each $r \geq 1$.
\end{lemma}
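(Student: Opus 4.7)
I would prove this by induction on $r \geq 1$.

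The base case $r = 1$ is immediate: by definition, a lift of Frobenius satisfies $\phi(x) \equiv x^p \pmod{p}$, which is exactly the claim for $r = 1$.

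For the inductive step, suppose the congruence holds for some $r \geq 1$, so that we may write
\[ \phi(x)^{p^{r-1}} = x^{p^r} + p^r z \]
for some $z \in R$. Raising both sides to the $p$-th power and expanding binomially gives
\[ \phi(x)^{p^r} = \sum_{k=0}^{p} \binom{p}{k} x^{p^r(p-k)} (p^r z)^k. \]
The $k=0$ term is $x^{p^{r+1}}$, which is the term we want; it remains to verify that every other term is divisible by $p^{r+1}$. For $1 \leq k \leq p-1$ we have $v_p\binom{p}{k} = 1$ and $v_p(p^{rk}) = rk$, so the $k$-th term has $p$-adic valuation at least $1 + rk \geq 1 + r$. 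The $k = p$ term has valuation at least $rp \geq r+1$, using $r(p-1) \geq 1$. This completes the induction.

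The argument is entirely routine; there is no real obstacle, only the bookkeeping of binomial coefficients and $p$-adic valuations in the inductive step. The one subtle point worth stating carefully is that for $k = p$ one must invoke $r(p-1) \geq 1$ rather than the weaker bound used for $1 \leq k \leq p-1$.
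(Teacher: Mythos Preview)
Your proof is correct and follows essentially the same approach as the paper's: induction on $r$, with the base case given by the definition of a Frobenius lift and the inductive step obtained by raising the previous congruence to the $p$-th power and checking $p$-adic valuations term by term. The only difference is cosmetic indexing (you pass from $r$ to $r+1$, the paper from $r-1$ to $r$).
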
 \begin{proof}
This holds by definition for $r = 1$, so assume inductively that
\[ \phi(x)^{p^{r-2}} \cong x^{p^{r-1}} \pmod{p^{r-1}} \mpunc. \]
Then
\[ \phi(x)^{p^{r-2}} = x^{p^{r-1}} + p^{r-1} y \]
for some $y \in R$, so
\begin{align*}
\phi(x)^{p^{r-1}}
  &= \prn*{x^{p^{r-1}} + p^{r-1} y}^p
\\&= x^{p^r} + \sum_{i=1}^p \binom{p}{i} x^{p^{r-1} (p-i)} p^{(r-1) i} y^i
\\&\cong x^{p^r} \pmod{p^{r-1}}
\mpunc.
\qedhere
\end{align*}
\end{proof}

\printbibliography

\end{document}